\documentclass[letterpaper, 10 pt, conference]{ieeeconf}  

\IEEEoverridecommandlockouts                              
\usepackage{graphics} 
\usepackage{epsfig} 

\usepackage{mathptmx} 
\DeclareMathAlphabet{\mathcal}{OMS}{cmsy}{m}{n}
\SetMathAlphabet{\mathcal}{bold}{OMS}{cmsy}{b}{n}

\usepackage{times} 

\usepackage{amsmath} 
\usepackage{amssymb}  
\usepackage{algorithm}
\usepackage{algpseudocode}
\usepackage{mathtools}

\usepackage{ntheorem}
\newtheorem{assumption}{Assumption}
\newtheorem{remark}{Remark}
\newtheorem{problem}{Problem}
\newtheorem{theorem}{Theorem}
\newtheorem{lemma}{Lemma}
\newtheorem{corollary}{Corollary}[lemma]

\usepackage{cite}
\usepackage{stfloats}

\allowdisplaybreaks[2]

\newcommand{\comment}[1]{}
\newcommand{\st}{\mathrm{s.t.}}

\title{\LARGE \bf Generalizing Output-Feedback Covariance Steering to Incorporate Non-Orthogonal Estimation Errors}

\author{Daniel C. Qi and Kenshiro Oguri
\thanks{This work was supported by the U.S. Air Force Office of Scientific Research through grant FA9550-23-1-0512.}
\thanks{D. C. Qi is a PhD student with the School of Aeronautics and Astronautics, Purdue University, West Lafayette, Indiana, 47907, USA. {\tt\small qi85@purdue.edu}}
\thanks{K. Oguri is an Assistant Professor with the School of Aeronautics and Astronautics, Purdue University, West Lafayette, Indiana, 47907, USA.}
}

\begin{document}

\maketitle
\thispagestyle{empty}
\pagestyle{empty}

\begin{abstract}

This paper addresses the problem of steering a state distribution over a finite horizon in discrete time with output feedback. The incorporation of output feedback introduces additional challenges arising from the statistical coupling between the true state distribution and the corresponding filtered state distribution. In particular, this paper extends existing covariance steering formulations to scenarios in which estimation errors are not orthogonal to the state estimates. This paper presents a sequential convex programming framework with rank constraints to solve this more general covariance steering with output feedback problem. The proposed approach is validated through numerical examples and Monte Carlo simulations, including cases with non-orthogonal estimation errors that prior techniques cannot address.

\end{abstract}

\section{Introduction}
This paper addresses the problem of covariance steering (CS) with output feedback in discrete time over a finite horizon. The concept of CS arises from the idea of determining a control policy over a distribution of trajectories rather than optimizing a single trajectory \cite{Hotz-CS, Collins-CS}. Both continuous \cite{Chen-CS} and discrete versions of this problem exist \cite{Bakolas-CS}. Variations of this CS problem have also been developed to address practical considerations, including data-driven CS \cite{Pilipovsky-data-driven-CS}, CS with sparse control inputs \cite{Naoya-Handsoff-CS}, square-root covariance steering \cite{Naoya-Sqrt-CS}, and CS with output feedback \cite{Pilipovsky-CS-Output-Feedback, Oguri-Safe-Autonomy}. Depending on the formulation, the problem can be solved with differential equations \cite{Chen-CS}, semidefinite programming \cite{Liu-CS, Ridderhof-CS}, or sequential convex programming \cite{Naoya-Sqrt-CS}. CS typically assumes Gaussian distributions, and the policy optimization problem reduces to steering the mean and covariance. Although theoretically valid for linear systems, CS has also yielded effective solutions to nonlinear problems through approximation and linearization \cite{Ridderhof-NL-CS, Oguri-Safe-Autonomy, Naoya-CS-JGCD, Qi-min-NL-Journal, Frassinella-Cov-steering, Nuccio-Cov-steering}.

This paper focuses on covariance steering with output feedback (CS-OF). Realistic engineering systems do not have access to the true state, so control actions are performed using the estimated state. Likewise, CS-OF analyzes how the true distribution evolves in the presence of measurement errors. As a natural choice for linear stochastic systems, existing CS-OF algorithms assume the Kalman filter in the formulation. In particular, previous CS-OF formulations assume that the statistics of the state estimate and the estimate error satisfy the orthogonality principle \cite{Chen-CS-Output-Feedback, Ridderhof-CS, Pilipovsky-CS-Output-Feedback, Oguri-Safe-Autonomy}: given a true state $X$ and an estimate $\hat{X}$, the estimation error is said to be orthogonal if 
\begin{equation}
   \mathbb{E}[\hat{X}(X - \hat{X}) ^\top] = 0 
\end{equation}

These approaches can then treat the filter estimation error and the estimated state as separate, uncorrelated stochastic processes, and combine them to compute the distribution of the true state. However, in many engineering scenarios, the estimation error may still be correlated with the estimate. Consider the following three scenarios: 
\begin{enumerate}
    \item Engineers conservatively initialize the state estimate covariance for improved filter convergence \cite{Stat-OD}.
    \item A deliberate suboptimal Kalman gain is used for filter robustness in nonlinear applications \cite{Zanetti-UW}. 
    \item A pre-determined trajectory path is created first, and a filter is propagated along this trajectory for performance and sensitivity analysis \cite{Woffinden-LinCov}. 
\end{enumerate}
For Scenario 1, the filter’s covariance is artificially modified to not align with the actual statistics. In Scenario 2, using a suboptimal gain violates the estimator orthogonality condition, which Kalman filter theory guarantees only when the optimal gain is used \cite{Chui-KalmanTextbook}. Lastly, conventional filter analyses assess estimator performance over an unknown true state. But in Scenario 3, this perspective is changed: the true trajectory is deterministic, and the estimator response is examined relative to this prescribed path. Such scenarios fundamentally alter the statistical relationships, and consequently, existing CS-OF approaches are not directly applicable in these cases of practical relevance. 

The approach proposed in this paper relaxes the orthogonality assumption by modeling the true state and state estimate as correlated stochastic processes. The contributions are threefold: (i) the CS-OF problem is generalized to accommodate non-orthogonal estimation errors, (ii) the proposed framework is shown to solve a class of problems that is a superset of those addressed by existing formulations, and (iii) an efficient rank-constrained solution approach is developed for the generalized CO-OF problem and validated through numerical examples.

\section{Problem Statement}
\subsection{Notation}
$\mathbb{E}[\cdot]$ denotes the expectation, $\mathrm{Cov}(\cdot)$ denotes covariance. $\mathcal{N}(\boldsymbol{\mu},P)$ denotes a normal distribution with mean $\mu$ and covariance $P$. $\mathbb{R}, \mathbb{R}^n, \mathbb{R}^{n\times m}$ denotes the sets of real numbers, $n$-dimensional real-valued vector, and $n\times m$ real-valued matrix respectively. $I_n$ denotes an identity matrix of size $n$. The symbol $\succeq$ denotes matrix inequality between symmetric matrices. $\mathbb{S}^n_+$ denotes the set of $n\times n$ symmetric positive semidefinite matrices. $\mathrm{tr}(\cdot)$ denotes the trace. The notation $\mathbb{Z}_{a:b}$ denotes the set of integers between and including $a,b$.

\subsection{Definitions and Assumptions}
Let the time horizon be discretized into $N$ nodes, forming $N-1$ segments. Let $k = 0,1,\ldots,N-1$ denote the time index. The vectors $\boldsymbol{x}_k \in \mathbb{R}^{n_x}$, $\hat{\boldsymbol{x}}_k \in \mathbb{R}^{n_x}$, and $\hat{\boldsymbol{x}}^-_k \in \mathbb{R}^{n_x}$ denote the true state, filtered state estimate, and a priori state estimate at time step $k$, respectively. The problem of CS-OF treats each of these quantities as a random vector, and thus each has associated statistics. The mean of the true state vector at $k$ is denoted as $\boldsymbol{\mu}_k = \mathbb{E}[\boldsymbol{x}_k]$. The covariances of the true state and the filtered state at $k$ are then given by $P_k = \mathrm{Cov}(\boldsymbol{x}_k)$, $\hat{P}_k = \mathrm{Cov}(\hat{\boldsymbol{x}}_k)$, and $\hat{P}^-_k = \mathrm{Cov}(\hat{\boldsymbol{x}}^-_k)$.
\comment{
\begin{equation}
    \begin{aligned}
P_k &= \mathrm{Cov}(\boldsymbol{x}_k) = \mathbb{E}[(\boldsymbol{x}_k-\mathbb{E}[\boldsymbol{x}_k])(\boldsymbol{x}_k-\mathbb{E}[\boldsymbol{x}_k])^\top] \\
\hat{P}_k &= \mathrm{Cov}(\hat{\boldsymbol{x}}_k) = \mathbb{E}[(\hat{\boldsymbol{x}}_k-\mathbb{E}[\hat{\boldsymbol{x}}_k])(\hat{\boldsymbol{x}}_k-\mathbb{E}[\hat{\boldsymbol{x}}_k])^\top] \\
\hat{P}^-_k &= \mathrm{Cov}(\hat{\boldsymbol{x}}^-_k) = \mathbb{E}[(\hat{\boldsymbol{x}}^-_k-\mathbb{E}[\hat{\boldsymbol{x}}^-_k])(\hat{\boldsymbol{x}}^-_k-\mathbb{E}[\hat{\boldsymbol{x}}^-_k])^\top] \\
    \end{aligned}
\end{equation}
}
Regarding the state statistics and filtering, this paper makes the following assumptions:
\begin{assumption}\label{assump: states are Gaussian}
$x_0$ and $\hat{x}^-_0$ are jointly Gaussian.
\end{assumption}
This implies that the joint distribution exists. Also, it implies that their joint and marginal distributions are Gaussian and are uniquely determined by their mean and covariance \cite{Maybeck-StochasticTextbook}.

\begin{assumption}\label{assump: filter first}
The estimator is designed prior to the control optimization problem.
\end{assumption}
A primary motivation for CS is the offline synthesis of feedback control policies \cite{Ridderhof-CS, Oguri-Safe-Autonomy, Naoya-CS-JGCD, Rapakoulias-CS-CC}. Consistent with existing CS literature, Assumption~\ref{assump: filter first} treats the estimator structure and gains as prescribed inputs to the control optimization problem rather than as optimization variables. But unlike the aforementioned existing CS-OF studies, this assumption does not require the estimator to be optimal and makes no assumptions about the statistical relationships.

\begin{assumption}\label{assump: filter unbiased}
The filter's estimate is unbiased.
\end{assumption}
Most estimators satisfy this assumption.

\begin{assumption}\label{assump: Pk Invert}
$\hat{P}_k$ is invertible.
\end{assumption}
Real-valued covariance matrices are guaranteed to be positive semidefinite. However, this paper assumes that the covariance of the state estimate is strictly positive definite and therefore invertible. This assumption is reasonable, as most engineering systems have nondegenerate covariance matrices. 

\subsection{Covariance Steering with Output Feedback}
The goal of CS-OF is to control the true state $\boldsymbol{x}_k$ using a policy that has the state estimate $\hat{\boldsymbol{x}}_k$ as an input. 
The dynamics of the true state are written as a discrete-time linear system with independent Gaussian noise:
\begin{equation} \label{eq: true state dynamics}
\begin{aligned}
    \boldsymbol{x}_{k+1} &= A_k \boldsymbol{x}_k + B_k \boldsymbol{u}_k + G_k\boldsymbol{w}_k
\end{aligned}
\end{equation}
where $\boldsymbol{u}_k \in \mathbb{R}^{n_u}$, $A_k \in \mathbb{R}^{n_x \times n_x}$, $B_k \in \mathbb{R}^{n_x \times n_u}$, $G_k \in \mathbb{R}^{n_x \times n_w}$ are the system matrices, and Gaussian process noise $\boldsymbol{w}_k\sim\mathcal{N}(0,I_{n_w})$. The measurement model is given by
\begin{equation} \label{eq: msr model}
    \boldsymbol{y}_k = H_k \boldsymbol{x}_k + D_k \boldsymbol{v}_k 
\end{equation}
where $\boldsymbol{y}_k \in \mathbb{R}^{n_y}$, $H_k \in \mathbb{R}^{n_y \times n_x}$ is the measurement mapping matrix, and Gaussian measurement noise $D_k\in \mathbb{R}^{n_y \times n_v}$, $\boldsymbol{v}_k \sim \mathcal{N}(0,I_{n_v})$. The estimation uses a linear gain update $L_k  \in \mathbb{R}^{n_x \times n_y}$ followed by the a priori estimate for the next timestep: 
\begin{subequations}\label{eq: filter updates}
\begin{align}
\hat{\boldsymbol{x}}_k &= \hat{\boldsymbol{x}}^-_k + L_k(\boldsymbol{y}_k - H_k \hat{\boldsymbol{x}}^-_k) 
\label{eq: filter update} \\
\hat{\boldsymbol{x}}^-_{k+1} &= A_k\hat{\boldsymbol{x}}_k + B_k\boldsymbol{u}_k
\label{eq: filter state dynamics}
\end{align}
\end{subequations}
Initial statistics and terminal mean and covariance conditions $\boldsymbol{\mu}_{f}$ and $P_f$ are applied:
\begin{equation}\label{eq: boundary conditions, first}
\begin{aligned}
&\boldsymbol{x}_{0} \sim \mathcal{N}(\boldsymbol{\mu}_{0},P_{0})
&\hat{\boldsymbol{x}}^-_{0} \sim \mathcal{N}(\boldsymbol{\mu}_{0},\hat{P}^-_{0}) \\
&\mathbb{E}[\boldsymbol{x}_{N-1}] = \boldsymbol{\mu}_{f}
&\mathrm{Cov}(\boldsymbol{x}_{N-1}) \preceq P_f\\
\end{aligned} 
\end{equation}
A quadratic control cost $J$ is assumed. The stochastic optimal control problem of CS-OF is then presented in Problem~\ref{prob: stochastic CS-OF}:
\begin{problem}\label{prob: stochastic CS-OF}
Covariance Steering with Output-Feedback:
\begin{subequations}
\begin{align}
\min_{\boldsymbol{x}_k,\boldsymbol{u}_k}
\quad
&
J =  \sum_{k = 0}^{N-2} \mathbb{E}[\boldsymbol{u}_k^\top R_k \boldsymbol{u}_k] \label{eq: stochastic CS-OF, obj}
\\
\st \quad
&\mathrm{Eqs.~\eqref{eq: true state dynamics},\eqref{eq: msr model},\eqref{eq: filter updates}}, \;\forall k \in \mathbb{Z}_{0:N-2}\\
&\mathrm{Eq.~\eqref{eq: boundary conditions, first}}
\end{align} 
\end{subequations}
\end{problem}
with control weight matrix $R_k \succ0$. Eq.~\eqref{eq: stochastic CS-OF, obj} can also have a state quadratic cost \cite{Liu-CS, Pilipovsky-CS-Output-Feedback}, but this term is omitted in this paper for concision. As per Assumption~\ref{assump: filter first}, $L_k$ and $H_k$ are known prior to the optimization, and thus not a part of the optimization variables. In many CS-OF works, the control policy takes the form of a deterministic feedforward term $\bar{\boldsymbol{u}}_k$ with state feedback \cite{Pilipovsky-CS-Output-Feedback, Naoya-CS-JGCD}:
\begin{equation} \label{eq: control policy}
    \boldsymbol{u}_k = \bar{\boldsymbol{u}}_k + K_k (\hat{\boldsymbol{x}}_k - \mu_k)
\end{equation}
where it can be seen that $\mathbb{E}[\boldsymbol{u}_k] = \bar{\boldsymbol{u}}_k$ and $\mathrm{Cov}(\boldsymbol{u}_k) = K_k \hat{P}_kK_k^\top$. With this control policy, the cost function can be rewritten:
\begin{equation}\label{eq: J quad cost}
    J = \sum_{k=0}^{N-2}
\bar{\boldsymbol{u}}_k^\top R_k \bar{\boldsymbol{u}}_k
+ \mathrm{tr}(R_k K_k \hat{P}_kK_k^\top)
\end{equation}

\subsubsection{Kalman Gain}
Typically, the Kalman filter is used as the linear estimator for CS-OF problems \cite{Pilipovsky-CS-Output-Feedback, Chen-CS-Output-Feedback, Naoya-CS-JGCD, Oguri-Safe-Autonomy}.
The Kalman gain $L_{\text{kf},k}$ is computed as
\begin{equation}\label{eq: optimal kalman gain}
    L_{\text{kf},k} = \widetilde{P}^-_k H_k^\top(H_k\widetilde{P}^-_kH_k^\top + D_k D_k^\top)^{-1}
\end{equation}
Notably, Kalman gains produces an orthogonal estimate \cite{Chui-KalmanTextbook}.

\subsubsection{Non-Orthogonal Gain}
Underweighting is a practice that deliberately chooses a suboptimal gain to improve robustness to noise, outliers, or modeling errors \cite{Zanetti-UW}. Consider a suboptimal, underweighted Kalman gain ${L}_{\mathrm{uw},k}$ in Eq.~\eqref{eq: uw Kalman gain}:
\begin{equation}\label{eq: uw Kalman gain}
    {L}_{\mathrm{uw},k} = \widetilde{P}^-_k H_k^\top
    ( p^{-1}H_k\widetilde{P}^-_kH_k^\top + D_k D_k^\top)^{-1}
\end{equation}
where $p \in (0,1]$ is the underweighting factor. As $p \rightarrow 1$, the underweight gain approaches the optimal Kalman gain, and as $p \rightarrow 0$, the measurement update is increasingly downweighted. Since the gain is now suboptimal, this underweighted Kalman gain produces a non-orthogonal estimate.

\subsubsection{Filter Covariance Updates With Arbitrary Gain}
The Joseph's form for the filter covariance update can be used regardless of whether $L_{\text{kf},k}$ or ${L}_{\mathrm{uw},k}$ is computed: 
\begin{equation}\label{eq: Kalman Covariance Update}
 \widetilde{P}_k = (I_{n_x} - L_k H_k)\widetilde{P}^-_k (I_{n_x} - L_k H_k)^\top + L_k D_k D_k^\top L_k^\top
\end{equation}
where $L_k$ denotes \emph{any} gain. The time update is
\begin{equation}
    \widetilde{P}^-_{k+1} = A_k \widetilde{P}_k A_k^\top + G_kG_k^\top,
\end{equation}

\section{Output-Feedback Covariance Steering with Non-Orthogonal State Estimate}
\subsection{Covariance Steering with Augmented State Statistics}
This paper adopts an augmented-state approach, similar to \cite{Chen-CS-Output-Feedback, Maybeck-StochasticTextbook, Geller-LinCov}, to track statistics. Define
\begin{equation}
    \mathbf{X}^-_k \triangleq 
    \begin{bmatrix}
        \boldsymbol{x}_k \\ \hat{\boldsymbol{x}}^-_k
    \end{bmatrix}
\qquad
    \mathbf{X}_k \triangleq 
    \begin{bmatrix}
        \boldsymbol{x}_k \\ \hat{\boldsymbol{x}}_k
    \end{bmatrix}
\end{equation}
where $\mathbf{X}^-_k$ is referred to as the \emph{a priori augmented state} and $\mathbf{X}_k$ as the \emph{a posteriori augmented state}. For the covariance evolution, define the following notation:
\begin{equation}
    \begin{aligned}
\mathbf{P}^-_k = \mathrm{Cov}(\mathbf{X}^-_k)
\qquad
\mathbf{P}_k = \mathrm{Cov}(\mathbf{X}_k)
    \end{aligned}
\end{equation}
\comment{
\begin{equation}
    \begin{aligned}
\mathbf{P}^-_k &= \mathrm{Cov}(\mathbf{X}^-_k) = \mathbb{E}[(\mathbf{X}^-_k-\mathbb{E}[\mathbf{X}^-_k])(\mathbf{X}^-_k-\mathbb{E}[\mathbf{X}^-_k])^\top] \\
\mathbf{P}_k &= \mathrm{Cov}(\mathbf{X}_k) =\mathbb{E}[(\mathbf{X}_k-\mathbb{E}[\mathbf{X}_k])(\mathbf{X}_k-\mathbb{E}[\mathbf{X}_k])^\top] 
    \end{aligned}
\end{equation}
}
and it can be seen that the submatrices correspond to the covariances of the original states.
\begin{equation}\label{eq: Phat block matrix}
\mathbf{P}^-_k  =
\begin{bmatrix}
    P_k & (\Sigma^-_k)^\top \\
    \Sigma^-_k & \hat{P}^-_k
\end{bmatrix}
\qquad
    \mathbf{P}_k  =
\begin{bmatrix}
    P_k & \Sigma_k^\top \\
    \Sigma_k & \hat{P}_k
\end{bmatrix}
\end{equation}
where $\Sigma^-_k = \mathrm{Cov}(\hat{\boldsymbol{x}}^-_k,\boldsymbol{x}_k)$ and $\Sigma_k = \mathrm{Cov}(\hat{\boldsymbol{x}}_k,\boldsymbol{x}_k)$.

\begin{lemma}[Augmented Mean/Covariance Evolution]\label{lemma: augmented stat evolution}
Under Assumptions~1--3 and the control policy in Eq.~\eqref{eq: control policy}, the evolution of the augmented statistics satisfy
\begin{subequations}\label{eq: augmented stat evolution}
\begin{align}
\boldsymbol{\mu}_{k+1} &= A_k\boldsymbol{\mu}_{k} + B_k\bar{\boldsymbol{u}}_k  
\label{eq: mean evolution} \\
\mathbf{P}_k&= \Phi_k\mathbf{P}^-_k\Phi_k^\top + \mathbf{L}_k
\label{eq: covariance filter update}\\
\mathbf{P}^-_{k+1} &= 
\left(\mathbf{A}_k + \mathbf{B}_k
\mathbf{K}_k\right)\mathbf{P}_k\left(\mathbf{A}_k + \mathbf{B}_k \mathbf{K}_k\right)^\top + \mathbf{Q}_k
\label{eq: covariance control update} 
\end{align}
\end{subequations}
with defined matrices
\begin{equation}
    \Phi_k \triangleq
    \begin{bsmallmatrix}
    I_{n_x} & 0 \\
    L_kH_k & I_{n_x} - L_k H_k
    \end{bsmallmatrix}
    ,\
    \mathbf{L}_k \triangleq 
    \begin{bsmallmatrix}
    0 & 0 \\
    0 & L_k D_k D_k^\top L_k^\top
    \end{bsmallmatrix}
\end{equation}
and
\begin{equation}
\begin{aligned}
    \mathbf{A}_k \triangleq
    \begin{bsmallmatrix}
    A_k & 0 \\
    0 & A_k
    \end{bsmallmatrix}
,\
    \mathbf{B}_k  \triangleq
    \begin{bsmallmatrix}
    B_k & 0 \\
    0 & B_k
    \end{bsmallmatrix}
,\
   \mathbf{K}_k \triangleq
    \begin{bsmallmatrix}
    0 & K_k \\
    0 & K_k
    \end{bsmallmatrix}
,\
    \mathbf{Q}_k \triangleq
    \begin{bsmallmatrix}
    G_kG_k^\top & 0 \\
    0 & 0
    \end{bsmallmatrix}
\end{aligned}
\end{equation}

\end{lemma}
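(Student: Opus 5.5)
The plan is to write the augmented state as an affine map of its predecessor plus independent noise, then take means and covariances of those maps.

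\textbf{Filter update.} Substituting the measurement model Eq.~\eqref{eq: msr model} into Eq.~\eqref{eq: filter update} gives
\[
\hat{\boldsymbol{x}}_k = L_kH_k\boldsymbol{x}_k + (I_{n_x}-L_kH_k)\hat{\boldsymbol{x}}^-_k + L_kD_k\boldsymbol{v}_k .
\]
The true state is unchanged by the update. Hence
\[
\mathbf{X}_k = \Phi_k\mathbf{X}^-_k + \begin{bmatrix}0\\ L_kD_k\end{bmatrix}\boldsymbol{v}_k .
\]

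\textbf{Time update.} Inserting the policy Eq.~\eqref{eq: control policy} into both Eq.~\eqref{eq: true state dynamics} and Eq.~\eqref{eq: filter state dynamics} gives
\[
\mathbf{X}^-_{k+1} = (\mathbf{A}_k+\mathbf{B}_k\mathbf{K}_k)\mathbf{X}_k + \begin{bmatrix}B_k\\B_k\end{bmatrix}(\bar{\boldsymbol{u}}_k - K_k\boldsymbol{\mu}_k) + \begin{bmatrix}G_k\\0\end{bmatrix}\boldsymbol{w}_k .
\]
Since $\mathbf{K}_k$ has zero first block column, $\mathbf{B}_k\mathbf{K}_k\mathbf{X}_k$ has both block rows equal to $B_kK_k\hat{\boldsymbol{x}}_k$. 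This is exactly how the policy enters both equations.

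\textbf{Means.} Take expectations, using the zero-mean noises. By Assumption~\ref{assump: filter unbiased}, together with the initial condition $\mathbb{E}[\hat{\boldsymbol{x}}^-_0]=\boldsymbol{\mu}_0$, an induction gives $\mathbb{E}[\hat{\boldsymbol{x}}_k]=\mathbb{E}[\hat{\boldsymbol{x}}^-_k]=\boldsymbol{\mu}_k$ for all $k$. Consequently $\mathbb{E}[K_k(\hat{\boldsymbol{x}}_k-\boldsymbol{\mu}_k)]=0$, and the first block of the mean recursion is Eq.~\eqref{eq: mean evolution}.

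Unbiasedness also follows independently of the assumption. The first block of the filter-update relation is the identity, and the second block row of $\Phi_k$ sums to $I_{n_x}$. Hence equal means are preserved through the update, and the time update applies the same affine map to both components.

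\textbf{Covariances.} The constant offsets drop out of the centered variables. Taking covariances of the two affine relations gives Eqs.~\eqref{eq: covariance filter update} and \eqref{eq: covariance control update}. Here
\[
\mathbf{L}_k = \begin{bmatrix}0\\L_kD_k\end{bmatrix}\begin{bmatrix}0\\L_kD_k\end{bmatrix}^\top ,\qquad
\mathbf{Q}_k = \begin{bmatrix}G_k\\0\end{bmatrix}\begin{bmatrix}G_k\\0\end{bmatrix}^\top ,
\]
and these follow from $\mathrm{Cov}(\boldsymbol{v}_k)=I$ and $\mathrm{Cov}(\boldsymbol{w}_k)=I$.

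\textbf{Main obstacle: vanishing cross terms.} The step needing care is showing that the cross terms vanish, i.e. that $\boldsymbol{v}_k$ is uncorrelated with $\mathbf{X}^-_k$ and $\boldsymbol{w}_k$ is uncorrelated with $\mathbf{X}_k$. I would argue by induction. $\mathbf{X}^-_k$ is a linear function of $(\boldsymbol{x}_0,\hat{\boldsymbol{x}}^-_0,\boldsymbol{w}_{0:k-1},\boldsymbol{v}_{0:k-1})$. $\mathbf{X}_k$ additionally depends on $\boldsymbol{v}_k$ but not on $\boldsymbol{w}_k$. Because the noises are independent of each other and of the initial conditions, the cross terms vanish.

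\textbf{Gaussianity.} By Assumption~\ref{assump: states are Gaussian}, the augmented state stays jointly Gaussian, since all maps are linear. Thus mean and covariance fully characterize it, although the recursions themselves only require second moments.

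\textbf{Non-orthogonality.} No orthogonality between $\hat{\boldsymbol{x}}_k$ and $\boldsymbol{x}_k-\hat{\boldsymbol{x}}_k$ is used anywhere in this argument. This is why the result holds for an arbitrary gain $L_k$.
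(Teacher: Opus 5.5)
Your proposal is correct and follows essentially the same route as the paper: you write the filter and time updates as the same affine maps of the augmented state driven by independent noise (matching Eqs.~\eqref{eq: augmented filter update} and \eqref{eq: augmented control update}), then obtain the mean recursion from unbiasedness and the covariance recursions from the independence of $\boldsymbol{v}_k$ and $\boldsymbol{w}_k$. Your induction for the vanishing cross terms, and your observation that unbiasedness already follows from the block structure of $\Phi_k$ and $\mathbf{A}_k+\mathbf{B}_k\mathbf{K}_k$ given equal initial means, are minor elaborations of steps the paper simply asserts.
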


\begin{proof}
Using Eqs.~\eqref{eq: msr model} and \eqref{eq: filter update}:
\begin{equation} \label{eq: augmented filter update}
    \begin{bsmallmatrix}
        \boldsymbol{x}_k \\ \hat{\boldsymbol{x}}_k
    \end{bsmallmatrix}
    =
    \begin{bsmallmatrix}
    I_{n_x} & 0 \\
    L_kH_k & I_{n_x} - L_k H_k
    \end{bsmallmatrix}
    \begin{bsmallmatrix}
        \boldsymbol{x}_k \\ \hat{\boldsymbol{x}}^-_k
    \end{bsmallmatrix}
    +
    \begin{bsmallmatrix}
        0 \\ L_k D_k
    \end{bsmallmatrix}
    \boldsymbol{v}_k
\end{equation}
Using Eqs.~\eqref{eq: true state dynamics}, \eqref{eq: filter state dynamics}, and \eqref{eq: control policy}:
\begin{equation} \label{eq: augmented control update}
\begin{aligned}
\begin{bsmallmatrix}
    \boldsymbol{x}_{k+1} \\ \hat{\boldsymbol{x}}^-_{k+1}
\end{bsmallmatrix}
=
    \begin{bsmallmatrix}
    A_k & B_k K_k \\
    0 & A_k + B_kK_k
    \end{bsmallmatrix}
    \begin{bsmallmatrix}
        \boldsymbol{x}_k \\ \hat{\boldsymbol{x}}_k
    \end{bsmallmatrix}
    +
    \begin{bsmallmatrix}
        B_k \\ B_k
    \end{bsmallmatrix}
    (\bar{u}_k - K_k \boldsymbol{\mu}_k)
        +    
    \begin{bsmallmatrix}
        G_k \\ 0
    \end{bsmallmatrix}\boldsymbol{w}_k
\end{aligned}
\end{equation}

Since Eqs.~\eqref{eq: augmented filter update} and \eqref{eq: augmented control update} are linear in the augmented state, its mean can be computed through linearity of expectations. From Assumption~\ref{assump: filter unbiased}, the estimate is unbiased (i.e., $\mathbb{E}[\boldsymbol{x}_k] = \mathbb{E}[\hat{\boldsymbol{x}}^-_k] = \mathbb{E}[\hat{\boldsymbol{x}}_k] = \boldsymbol{\mu}_k$ for all $k$), so the evolution of the mean can be reduced to Eq~\eqref{eq: mean evolution}. Using the defined matrices and noting $v_k$ and $w_k$ are
independent of the augmented states, the covariance evolution for Eq.~\eqref{eq: augmented filter update} and \eqref{eq: augmented control update} becomes Eqs.~\eqref{eq: covariance filter update} and \eqref{eq: covariance control update} respectively.
\end{proof}
Eq.~\eqref{eq: boundary conditions, first} is changed to reflect the augmented statistics:
\begin{equation} \label{eq: boundary conditions}
\begin{aligned}
\boldsymbol{\mu}_0 = \bar{\boldsymbol{\mu}}_0 
,\quad
\mathbf{P}^-_0 = \bar{\mathbf{P}}^-_0
,\quad
\boldsymbol{\mu}_{N-1} = \boldsymbol{\mu}_f 
,\quad
P_{N-1} \preceq P_f
\end{aligned}
\end{equation}
with initial joint statistics constrained to $\bar{\boldsymbol{\mu}}_0$ and $\bar{\mathbf{P}}^-_0$. Problem~\ref{prob: stochastic CS-OF} can be expressed as Problem~\ref{prob: CS with Aug State} using Lemma~\ref{lemma: augmented stat evolution}:
\begin{problem}\label{prob: CS with Aug State}
CS-OF with Augmented States:
\begin{subequations}
\begin{align}
\min_{
\boldsymbol{\mu}_k, \mathbf{P}^-_k,\mathbf{P}_k,\bar{\boldsymbol{u}}_k}
\quad& 
J = \sum_{k=0}^{N-2}
\bar{\boldsymbol{u}}_k^\top R_k \bar{\boldsymbol{u}}_k
+ \mathrm{tr}(R_k K_k \hat{P}_kK_k^\top)
\\
\st \quad
&\mathrm{Eq.~\eqref{eq: augmented stat evolution}}, \;\forall k \in \mathbb{Z}_{0:N-2}\\
&\mathrm{Eq.~\eqref{eq: boundary conditions}}
\end{align} 
\end{subequations}
\end{problem}

\subsubsection{Relation to Previous Works}
Prior works in CS-OF make the additional orthogonality assumption and under this, the covariance evolution can be written in the form
\begin{equation}\label{eq: CSOF, old version}
    \begin{aligned}
\hat{P}_{k+1} &= (A_k +B_kK_k)\hat{P}_k(A_k +B_kK_k)^\top + L_{k+1}P_{\widetilde{y}^-_{k+1}}L_{k+1}^\top \\
P_k &= \hat{P}_{k} + \widetilde{P}_k
    \end{aligned}
\end{equation}
where $P_{\widetilde{y}^-_k} = H_k \widetilde{P}^-_k H_k^\top + D_k D_k^\top$ is the innovation covariance and initial statistics are computed with $P_0 = \hat{P}^-_0 + \widetilde{P}^-_0$, $\hat{P}_0 = \hat{P}^-_0 + L_0P_{\widetilde{y}^-_0}L_0^\top$ \cite{Ridderhof-CS, Pilipovsky-CS-Output-Feedback, Naoya-CS-JGCD}. With this, the following lemma and remark are made:

\begin{lemma}\label{lemma: CSOF equivalence}
If the orthogonality of the estimator is assumed, Eqs.~\eqref{eq: covariance filter update} and \eqref{eq: covariance control update} are reduced to Eq.~\eqref{eq: CSOF, old version}. 
\end{lemma}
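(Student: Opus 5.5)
The plan is to rewrite the augmented covariance blocks in terms of the estimation errors $\boldsymbol{e}_k \triangleq \boldsymbol{x}_k - \hat{\boldsymbol{x}}_k$ and $\boldsymbol{e}^-_k \triangleq \boldsymbol{x}_k - \hat{\boldsymbol{x}}^-_k$, with $\widetilde{P}_k = \mathrm{Cov}(\boldsymbol{e}_k)$ and $\widetilde{P}^-_k = \mathrm{Cov}(\boldsymbol{e}^-_k)$. I would then read off the relevant blocks of Eqs.~\eqref{eq: covariance filter update} and \eqref{eq: covariance control update}. I take orthogonality to mean $\mathbb{E}[\hat{\boldsymbol{x}}_k \boldsymbol{e}_k^\top]=0$ and $\mathbb{E}[\hat{\boldsymbol{x}}^-_k (\boldsymbol{e}^-_k)^\top]=0$ for all $k$. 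By Assumption~\ref{assump: filter unbiased} all means coincide, so these are statements about covariances. The first consequence I would record is that the off-diagonal blocks in Eq.~\eqref{eq: Phat block matrix} collapse: $\Sigma_k = \mathrm{Cov}(\hat{\boldsymbol{x}}_k, \hat{\boldsymbol{x}}_k + \boldsymbol{e}_k) = \hat{P}_k$, and likewise $\Sigma^-_k = \hat{P}^-_k$. It also gives $P_k = \mathrm{Cov}(\hat{\boldsymbol{x}}_k + \boldsymbol{e}_k) = \hat{P}_k + \widetilde{P}_k$, which is the second line of Eq.~\eqref{eq: CSOF, old version}.

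Next I would verify that $\widetilde{P}_k$ defined this way is exactly the filter covariance propagated by Eq.~\eqref{eq: Kalman Covariance Update} and the time update. From Eq.~\eqref{eq: augmented filter update}, $\boldsymbol{e}_k = (I_{n_x}-L_kH_k)\boldsymbol{e}^-_k - L_kD_k\boldsymbol{v}_k$. Since $\boldsymbol{v}_k$ is independent of $\boldsymbol{e}^-_k$, this yields the Joseph form for any gain. From Eq.~\eqref{eq: augmented control update}, $\boldsymbol{e}^-_{k+1} = A_k\boldsymbol{e}_k + G_k\boldsymbol{w}_k$, which gives the time update. This step does not use orthogonality at all.

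For the $\hat{P}$ recursion, I would take the $(2,2)$ block of Eq.~\eqref{eq: covariance control update}. Because $\hat{\boldsymbol{x}}^-_{k+1}-\boldsymbol{\mu}_{k+1} = (A_k+B_kK_k)(\hat{\boldsymbol{x}}_k-\boldsymbol{\mu}_k)$, this block is $\hat{P}^-_{k+1} = (A_k+B_kK_k)\hat{P}_k(A_k+B_kK_k)^\top$. I would then take the $(2,2)$ block of Eq.~\eqref{eq: covariance filter update} at step $k+1$. Writing $\hat{\boldsymbol{x}}_{k+1} = \hat{\boldsymbol{x}}^-_{k+1} + L_{k+1}(H_{k+1}\boldsymbol{e}^-_{k+1} + D_{k+1}\boldsymbol{v}_{k+1})$, the a priori orthogonality kills the cross terms between $\hat{\boldsymbol{x}}^-_{k+1}$ and $\boldsymbol{e}^-_{k+1}$. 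Independence of $\boldsymbol{v}_{k+1}$ removes the remaining cross terms. This leaves $\hat{P}_{k+1} = \hat{P}^-_{k+1} + L_{k+1}P_{\widetilde{y}^-_{k+1}}L_{k+1}^\top$, and substituting the previous expression gives the first line of Eq.~\eqref{eq: CSOF, old version}. The same computation at $k=0$ gives the stated initial conditions $\hat{P}_0 = \hat{P}^-_0 + L_0P_{\widetilde{y}^-_0}L_0^\top$ and $P_0 = \hat{P}^-_0+\widetilde{P}^-_0$.

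The main obstacle is bookkeeping over which orthogonality is assumed and where it is used. I would state the hypothesis as orthogonality of the a posteriori estimate at every $k$ and derive the a priori version from it. This works because $\hat{\boldsymbol{x}}^-_{k+1}$ is an affine function of $\hat{\boldsymbol{x}}_k$ and $\boldsymbol{e}^-_{k+1} = A_k\boldsymbol{e}_k + G_k\boldsymbol{w}_k$, so $\mathbb{E}[\hat{\boldsymbol{x}}^-_{k+1}(\boldsymbol{e}^-_{k+1})^\top]=0$. The only remaining case is $k=0$, where the a priori orthogonality must be taken as part of the initial condition $\bar{\mathbf{P}}^-_0$ with $\Sigma^-_0=\hat{P}^-_0$.

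As a consistency check, I would confirm that the $(1,1)$ block of Eq.~\eqref{eq: covariance control update} with $\Sigma_k=\hat{P}_k$ equals $\hat{P}^-_{k+1}+\widetilde{P}^-_{k+1}$. Explicitly, $A_kP_kA_k^\top + B_kK_k\hat{P}_kA_k^\top + A_k\hat{P}_kK_k^\top B_k^\top + B_kK_k\hat{P}_kK_k^\top B_k^\top + G_kG_k^\top = (A_k+B_kK_k)\hat{P}_k(A_k+B_kK_k)^\top + A_k\widetilde{P}_kA_k^\top + G_kG_k^\top$. This shows the full augmented recursion carries no information beyond Eq.~\eqref{eq: CSOF, old version}.
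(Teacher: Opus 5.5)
Your proof is correct, but it takes a different route from the paper's.

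\textbf{The paper's route.} It works at the matrix level. It inserts the orthogonal block form of $\mathbf{P}^-_k$ into $\Phi_k\mathbf{P}^-_k\Phi_k^\top + \mathbf{L}_k$ and reads off two blocks: $\hat{P}_k = \hat{P}^-_k + L_kP_{\widetilde{y}^-_k}L_k^\top$ and the cross block $\hat{P}^-_k + L_kH_k\widetilde{P}^-_k$. It then invokes the optimal-gain identity $\widetilde{P}_k = \widetilde{P}^-_k - L_kP_{\widetilde{y}^-_k}L_k^\top$. This reconciles the $(1,1)$ block $\hat{P}^-_k + \widetilde{P}^-_k$, which the measurement update leaves unchanged, with $\hat{P}_k + \widetilde{P}_k$.

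\textbf{Your route.} You work with the error variables and take a posteriori orthogonality at every step as the hypothesis. Then $P_k = \hat{P}_k + \widetilde{P}_k$ and $\Sigma_k = \hat{P}_k$ are immediate. You recover a priori orthogonality by propagation, and you never use the Kalman gain formula.

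\textbf{What yours buys.} You get a cleaner accounting of which orthogonality is used where. You also show explicitly, via the Joseph form holding for any gain, that the filter-reported $\widetilde{P}_k$ coincides with the realized error covariance once the initialization matches. The paper's proof takes this for granted, despite its own Remark distinguishing the two.

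\textbf{What the paper's buys.} Its explicit cross block $\hat{P}^-_k + L_kH_k\widetilde{P}^-_k$ makes visible something you assume rather than derive. Given a priori orthogonality at step $k$, a posteriori orthogonality holds exactly when $L_kH_k\widetilde{P}^-_k = L_kP_{\widetilde{y}^-_k}L_k^\top$. The Kalman gain satisfies this, while $L_{\mathrm{uw},k}$ generally does not. Adding that one line would show your standing hypothesis is attainable: it reduces to a condition on $\bar{\mathbf{P}}^-_0$ plus the choice $L_k = L_{\mathrm{kf},k}$. It would also make explicit where optimality of the gain enters, and why the reduction fails in the underweighted case.
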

\begin{proof}
See Appendix~\ref{appendix: CSOF equivalence}.
\end{proof}

\begin{remark}
It is important to distinguish the realized estimation error covariance $\mathrm{Cov}(\boldsymbol{x}_k - \hat{\boldsymbol{x}}_k)$ from the error covariance $\widetilde{P}_k$ reported by the filter. The covariance $\mathrm{Cov}(\boldsymbol{x}_k - \hat{\boldsymbol{x}}_k)= P_k + \hat{P}_k - \Sigma_k - \Sigma_k^\top$ holds unconditionally, whereas $\widetilde{P}_k$ depends on the statistical assumptions employed in the filter's design.
\end{remark}

\subsection{Convex Relaxation}
One of the main difficulties in optimizing CS control policies is the bilinearities that appear in Eq.~\eqref{eq: covariance control update} which make the problem nonconvex. This section is inspired by the common substitution techniques in previous CS work to resolve these bilinearities \cite{Liu-CS}.

\begin{lemma}[Convex Relaxation of Lemma~\ref{lemma: augmented stat evolution}]\label{lemma: augmented stat evolution, relaxed}
Under Assumption~\ref{assump: Pk Invert}, define lifting variables
\begin{subequations}\label{eq: relaxation variables}
\begin{align}
U_k &\triangleq K_k \hat{P}_k \label{eq: Uk def}\\
Y_k &\triangleq U_k \hat{P}_k^{-1}U_k^\top (=K_k \hat{P}_k K_k^\top) 
,\quad\in \mathbb{S}^{n_u}_+ \label{eq: Yk def} \\
S_k &\triangleq K_k \Sigma_k \label{eq: Sk def} \\
Z_k &\triangleq \Sigma_k^\top \hat{P}_k^{-1}\Sigma_k 
,\quad\in \mathbb{S}^{n_u}_+ \label{eq: Zk def}
\end{align}
\end{subequations}
and defined matrices
\begin{equation}\label{eq: relaxed matrices}
\begin{aligned}
    \mathbf{Y}_k \triangleq
    \begin{bsmallmatrix}
    Y_k & Y_k \\
    Y_k & Y_k
    \end{bsmallmatrix}
,\
    \mathbf{W}_k  \triangleq
    \begin{bsmallmatrix}
    S_k & U_k \\
    S_k & U_k
    \end{bsmallmatrix}
,\
   \mathbf{S}_k \triangleq
    \begin{bsmallmatrix}
    Y_k & S_k \\
    S_k^\top & Z_k
    \end{bsmallmatrix}
,\
    \mathbf{U}_k  \triangleq
    \begin{bsmallmatrix}
    U_k \\ \Sigma_k^\top
    \end{bsmallmatrix}
\end{aligned}
\end{equation}

Then, a convex relaxation of Eq.~\eqref{eq: covariance control update} is
\begin{subequations}\label{eq: augmented relaxation evolution}
\begin{gather}
\begin{aligned}
    \mathbf{P}^-_{k+1} =& 
\mathbf{A}_k\mathbf{P}_k\mathbf{A}_k^\top
+\mathbf{B}_k\mathbf{Y}_k\mathbf{B}_k^\top \\
&+\mathbf{B}_k\mathbf{W}_k\mathbf{A}_k^\top
+\mathbf{A}_k\mathbf{W}_k^\top\mathbf{B}_k^\top
+ \mathbf{Q}_k
\end{aligned}
\label{eq: covariance control update, relaxed}\\
M_k \triangleq
\begin{bmatrix}
\hat{P}_k & \mathbf{U}_k^\top \\
     \mathbf{U}_k & \mathbf{S}_k
\end{bmatrix}
\succeq 0, 
\qquad
\in
\mathbb{R}^{m \times m}
\label{eq: Sk LMI}
\end{gather}
\end{subequations}
where $m = 2n_x+n_u$. 
\end{lemma}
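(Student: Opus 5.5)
The plan is to show two things. First, after the lifting \eqref{eq: relaxation variables}, the control update \eqref{eq: covariance control update} becomes \emph{exactly} the affine relation \eqref{eq: covariance control update, relaxed}. Second, the only nonconvex ingredients left, the definitions of $Y_k$, $S_k$ and $Z_k$, are equivalent to the Schur complement of $M_k$ being zero. Replacing that equality by $M_k\succeq 0$ then gives a convex superset of the original feasible set, which is what "convex relaxation" means here.

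\textbf{Step 1 (exact rewriting).} Expand $(\mathbf{A}_k+\mathbf{B}_k\mathbf{K}_k)\mathbf{P}_k(\mathbf{A}_k+\mathbf{B}_k\mathbf{K}_k)^\top$ into four terms: $\mathbf{A}_k\mathbf{P}_k\mathbf{A}_k^\top$, $\mathbf{B}_k\mathbf{K}_k\mathbf{P}_k\mathbf{K}_k^\top\mathbf{B}_k^\top$, and the two cross terms. Using the block form \eqref{eq: Phat block matrix} of $\mathbf{P}_k$ and the structure of $\mathbf{K}_k$, compute
\[
\mathbf{K}_k\mathbf{P}_k=\begin{bsmallmatrix} K_k\Sigma_k & K_k\hat{P}_k\\ K_k\Sigma_k & K_k\hat{P}_k\end{bsmallmatrix}=\begin{bsmallmatrix} S_k & U_k\\ S_k & U_k\end{bsmallmatrix}=\mathbf{W}_k,
\]
and then
\[
\mathbf{K}_k\mathbf{P}_k\mathbf{K}_k^\top=\mathbf{W}_k\mathbf{K}_k^\top .
\]
Since $\mathbf{K}_k^\top$ has zero first block row and $[K_k^\top\; K_k^\top]$ as its second block row, every block of $\mathbf{W}_k\mathbf{K}_k^\top$ equals $U_kK_k^\top=K_k\hat{P}_kK_k^\top=Y_k$. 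Hence $\mathbf{K}_k\mathbf{P}_k\mathbf{K}_k^\top=\mathbf{Y}_k$. Substituting these identities gives \eqref{eq: covariance control update, relaxed} with equality. This expression is affine in $(\mathbf{P}_k,\mathbf{Y}_k,\mathbf{W}_k)$, so it is affine in the decision variables $(\mathbf{P}_k,U_k,S_k,Y_k)$. The bilinearity in $K_k$ has been absorbed into the lifting variables.

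\textbf{Step 2 (nonconvex coupling as a Schur complement).} The remaining constraints tie the lifted variables to $\hat{P}_k$ and $\Sigma_k$, which are blocks of the variable $\mathbf{P}_k$. Eliminating $K_k=U_k\hat{P}_k^{-1}$ (allowed by Assumption~\ref{assump: Pk Invert}) gives
\[
Y_k=U_k\hat{P}_k^{-1}U_k^\top,\qquad S_k=U_k\hat{P}_k^{-1}\Sigma_k,\qquad Z_k=\Sigma_k^\top\hat{P}_k^{-1}\Sigma_k .
\]
In compact form this is $\mathbf{S}_k=\mathbf{U}_k\hat{P}_k^{-1}\mathbf{U}_k^\top$, because
\[
\mathbf{U}_k\hat{P}_k^{-1}\mathbf{U}_k^\top=\begin{bsmallmatrix} U_k\hat{P}_k^{-1}U_k^\top & U_k\hat{P}_k^{-1}\Sigma_k\\ \Sigma_k^\top\hat{P}_k^{-1}U_k^\top & \Sigma_k^\top\hat{P}_k^{-1}\Sigma_k\end{bsmallmatrix}.
\]
The dimensions are consistent: $Z_k$ is $n_x\times n_x$ and $M_k$ is $(2n_x+n_u)$-square. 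By the Schur complement lemma with $\hat{P}_k\succ0$, $M_k\succeq 0$ holds if and only if $\mathbf{S}_k-\mathbf{U}_k\hat{P}_k^{-1}\mathbf{U}_k^\top\succeq 0$. So the exact lifting corresponds to $M_k\succeq0$ with a vanishing Schur complement, which is equivalent to $\mathrm{rank}\,M_k=n_x$. Relaxing this equality to the LMI \eqref{eq: Sk LMI} contains every solution of the original constraints. Every entry of $M_k$ is affine in the variables ($\hat{P}_k$ and $\Sigma_k$ are blocks of $\mathbf{P}_k$), so the relaxed set is convex.

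\textbf{Step 3 (convexity of the overall relaxed problem).} The objective term $\mathrm{tr}(R_kK_k\hat{P}_kK_k^\top)=\mathrm{tr}(R_kY_k)$ is linear. Equation \eqref{eq: covariance filter update} is linear in $\mathbf{P}^-_k$ and $\mathbf{P}_k$. The boundary conditions \eqref{eq: boundary conditions} are affine or LMI. Hence the feasible set of the relaxed problem is convex and contains the feasible set of Problem~\ref{prob: CS with Aug State}.

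\textbf{Main obstacle.} I expect the main difficulty to be the cross block $S_k$. One must check that the single stacked matrix $\mathbf{U}_k$ reproduces the off-diagonal condition $S_k=K_k\Sigma_k$, and not merely the two diagonal Schur conditions on $Y_k$ and $Z_k$ separately. This is what justifies coupling $Y_k$, $S_k$ and $Z_k$ in one LMI. It also means that when the relaxation is tight ($\mathrm{rank}\,M_k=n_x$), the gain is recovered as $K_k=U_k\hat{P}_k^{-1}$ and it is consistent with $S_k$.
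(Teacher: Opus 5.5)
Your proposal is correct and follows the paper's proof essentially step for step. You expand the closed-loop update using the block form of $\mathbf{P}_k$; your identities $\mathbf{K}_k\mathbf{P}_k=\mathbf{W}_k$ and $\mathbf{K}_k\mathbf{P}_k\mathbf{K}_k^\top=\mathbf{Y}_k$ just make the paper's substitution explicit. You then eliminate $K_k=U_k\hat{P}_k^{-1}$ to get the stacked equality $\mathbf{S}_k=\mathbf{U}_k\hat{P}_k^{-1}\mathbf{U}_k^\top$, relax it to $\succeq 0$, and apply the Schur complement. Your dimension check also correctly notes that $Z_k$ is $n_x\times n_x$, which matches $m=2n_x+n_u$; the statement's $\mathbb{S}^{n_u}_+$ is a typo.
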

\begin{proof}
Substituting Eq.~\eqref{eq: Phat block matrix} into Eq.~\eqref{eq: covariance control update} yields
\begin{equation}\label{eq: covariance control update, expanded}
\begin{aligned}
\mathbf{P}^-_{k+1} =&
\mathbf{A}_k \mathbf{P}_{k} \mathbf{A}_k^\top
+ 
\mathbf{B}_k
\begin{bsmallmatrix}
    K_k \hat{P}_k K_k^\top& K_k \hat{P}_k K_k^\top \\
    K_k \hat{P}_k K_k^\top& K_k \hat{P}_k K_k^\top
\end{bsmallmatrix}
\mathbf{B}_k^\top
\\
&+ \mathbf{B}_k
\begin{bsmallmatrix}
    K_k \Sigma_k & K_k \hat{P}_k \\
    K_k \Sigma_k & K_k \hat{P}_k
\end{bsmallmatrix}\mathbf{A}_k^\top
+\mathbf{A}_k
\begin{bsmallmatrix}
    K_k \Sigma_k & K_k \hat{P}_k \\
    K_k \Sigma_k & K_k \hat{P}_k
\end{bsmallmatrix}^\top \mathbf{B}_k^\top
+\mathbf{Q}_k
\end{aligned} 
\end{equation}
Substituting quantities from Eqs.~\eqref{eq: relaxation variables} and \eqref{eq: relaxed matrices} yields Eq.~\eqref{eq: covariance control update, relaxed}. From Assumption~\ref{assump: Pk Invert}, $K_k$ is unique:
\begin{equation}\label{eq: gain recovery}
    K_k = U_k \hat{P}_k^{-1}
\end{equation}
Note that the cross-covariance matrices $\Sigma_k$ are not guaranteed to be invertible, but the value of $K_k$ in Eq.~\eqref{eq: Sk def} must be identical to the value found in Eq.~\eqref{eq: gain recovery}. Substituting,
\begin{equation} \label{eq: Sk non symmetrizes}
    S_k = U_k \hat{P}_k^{-1} \Sigma_k 
    \quad\Leftrightarrow\quad
    S_k - U_k \hat{P}_k^{-1}\Sigma_k = 0
\end{equation}
Noting that $S_k$ is not symmetric, Eq.~\eqref{eq: Sk aug unrelaxed} is introduced to facilitate the subsequent semidefinite formulations. Satisfaction of Eq.~\eqref{eq: Sk aug unrelaxed} also implies satisfaction of Eq.~\eqref{eq: Sk non symmetrizes}.
\begin{equation} \label{eq: Sk aug unrelaxed}
\begin{aligned}
&
\begin{bsmallmatrix}
    Y_k & S_k  \\
    S_k^\top & Z_k 
\end{bsmallmatrix}
-
\begin{bsmallmatrix}
    U_k  \\
    \Sigma_k^\top
\end{bsmallmatrix}
\hat{P}_k^{-1}
\begin{bsmallmatrix}
    U_k \\
    \Sigma_k^\top
\end{bsmallmatrix}^\top
=0 \\
\end{aligned}
\end{equation}
and Eq.~\eqref{eq: Sk aug unrelaxed} is compactly expressed as $\mathbf{S}_k - \mathbf{U}_k\hat{P}_k^{-1}\mathbf{U}_k^\top =0$. Since the expression is still nonconvex, the equality constraint is relaxed with a matrix inequality constraint.
\begin{equation} \label{eq: Sk relaxation}
\mathbf{S}_k - \mathbf{U}_k
\hat{P}_k^{-1}
\mathbf{U}_k^\top  \succeq0
\end{equation}
Using Schur's complement \cite{Zhang-SchurTextbook}, it is equivalent to Eq.~\eqref{eq: Sk LMI}
\end{proof}
These techniques from previous CS works can convert Problem~\ref{prob: CS with Aug State} into a similarly convexified problem:
\begin{problem}\label{prob: relaxed aug state}
Convex Relaxation of Problem~\ref{prob: CS with Aug State}:
\begin{subequations}\label{eq: relaxed aug state}
\begin{align}
\min_{\substack{
\boldsymbol{\mu}_k, \mathbf{P}^-_k,\mathbf{P}_k,\bar{\boldsymbol{u}}_k
\\U_k, S_k, Z_k, Y_k}}
\quad& J = \sum_{k=0}^{N-2}
\bar{\boldsymbol{u}}_k^\top R_k \bar{\boldsymbol{u}}_k
+ \mathrm{tr}(R_k Y_k)
\label{eq: Obj - trace Yk and Zk}
\\
\st \quad 
&\mathrm{Eqs.~\eqref{eq: mean evolution},\eqref{eq: covariance filter update}}, \;\forall k \in \mathbb{Z}_{0:N-2}\\
&\mathrm{Eqs.~\eqref{eq: augmented relaxation evolution}}, \;\forall k \in \mathbb{Z}_{0:N-2}\\
&\mathrm{Eq.~\eqref{eq: boundary conditions}}
\end{align} 
\end{subequations}
\end{problem} 
\begin{remark}
Prior work in the CS literature has shown that this relaxation technique is lossless at optimality under appropriately chosen objective functions, specifically $\mathrm{tr}(Y_k)$ \cite{Liu-CS, Rapakoulias-CS-CC, Pilipovsky-CS-Output-Feedback}. Unfortunately, the corresponding losslessness proofs do not extend directly to the relaxed Problem~\ref{prob: relaxed aug state}. 
\end{remark}

\subsection{Restoring Losslessness via a Rank Constraint}
\begin{corollary}\label{col: Sk LMI equality}
Under Assumption~\ref{assump: Pk Invert}, if $\mathbf{S}_k - \mathbf{U}_k\hat{P}_k^{-1}\mathbf{U}_k^\top = 0$, then $K_k$ is unique and Eq.~\eqref{eq: covariance control update} coincides with Eq.~\eqref{eq: covariance control update, relaxed}, making Lemma~\ref{lemma: augmented stat evolution, relaxed} lossless convexification.
\end{corollary}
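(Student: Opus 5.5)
The plan is to read the equality $\mathbf{S}_k - \mathbf{U}_k\hat{P}_k^{-1}\mathbf{U}_k^\top = 0$ blockwise and recover every lifting variable of Lemma~\ref{lemma: augmented stat evolution, relaxed} as the bilinear expression it stands for. Given relaxed variables $(\hat{P}_k,\Sigma_k,U_k,Y_k,S_k,Z_k)$ satisfying the equality, I would take $K_k \triangleq U_k\hat{P}_k^{-1}$, which is well defined and the only solution of $U_k = K_k\hat{P}_k$ by Assumption~\ref{assump: Pk Invert}; this gives uniqueness of $K_k$.

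Next I would expand the block equation using the definition of $\mathbf{U}_k$ in Eq.~\eqref{eq: relaxed matrices}:
\begin{equation*}
\begin{bsmallmatrix} Y_k & S_k \\ S_k^\top & Z_k \end{bsmallmatrix}
=
\begin{bsmallmatrix} U_k\hat{P}_k^{-1}U_k^\top & U_k\hat{P}_k^{-1}\Sigma_k \\ \Sigma_k^\top\hat{P}_k^{-1}U_k^\top & \Sigma_k^\top\hat{P}_k^{-1}\Sigma_k \end{bsmallmatrix}.
\end{equation*}
Substituting $U_k = K_k\hat{P}_k$, the (1,1) block becomes $Y_k = K_k\hat{P}_kK_k^\top$, and the (1,2) block becomes $S_k = K_k\hat{P}_k\hat{P}_k^{-1}\Sigma_k = K_k\Sigma_k$. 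These agree with Eqs.~\eqref{eq: Yk def} and \eqref{eq: Sk def}. The (2,2) block reproduces Eq.~\eqref{eq: Zk def}, and the (2,1) block is the transpose of the (1,2) block, so it adds no information. This step is the one that needs care, because $\Sigma_k$ may be singular and Eq.~\eqref{eq: Sk def} alone cannot determine $K_k$. The point is that $K_k$ is fixed through $\hat{P}_k$, and the off-diagonal block then forces $S_k$ to agree with that same $K_k$, which is Eq.~\eqref{eq: Sk non symmetrizes}.

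Finally, I would substitute these identities into $\mathbf{Y}_k$ and $\mathbf{W}_k$ in Eq.~\eqref{eq: relaxed matrices}. This reproduces exactly the middle matrices of Eq.~\eqref{eq: covariance control update, expanded}. That display was already shown in the proof of Lemma~\ref{lemma: augmented stat evolution, relaxed} to be an algebraic rewriting of Eq.~\eqref{eq: covariance control update}, obtained by expanding $(\mathbf{A}_k+\mathbf{B}_k\mathbf{K}_k)\mathbf{P}_k(\cdot)^\top$ with $\mathbf{P}_k$ partitioned as in Eq.~\eqref{eq: Phat block matrix}. Hence Eq.~\eqref{eq: covariance control update, relaxed} evaluated at the relaxed variables equals Eq.~\eqref{eq: covariance control update} evaluated at the recovered $K_k$.

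For the cost, $\mathrm{tr}(R_kY_k) = \mathrm{tr}(R_kK_k\hat{P}_kK_k^\top)$, so the objectives of Problems~\ref{prob: CS with Aug State} and \ref{prob: relaxed aug state} coincide. The equality also trivially satisfies the LMI \eqref{eq: Sk LMI} via the Schur complement. Therefore any relaxed solution satisfying the equality is feasible for the original problem with the same cost, and every original solution maps into the relaxation in the same way. This establishes losslessness at such points.
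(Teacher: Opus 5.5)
Your proposal is correct and follows the paper's own argument: the gain $K_k = U_k\hat{P}_k^{-1}$ is unique by Assumption~\ref{assump: Pk Invert} (Eq.~\eqref{eq: gain recovery}), and the block equality \eqref{eq: Sk aug unrelaxed} makes all the lifting definitions in Eq.~\eqref{eq: relaxation variables} hold at once, so Eq.~\eqref{eq: covariance control update, relaxed} coincides with Eq.~\eqref{eq: covariance control update}. You spell out the blockwise verification, and the cost and LMI checks, which the paper's one-line proof leaves implicit.
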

\begin{proof}
Eq.~\eqref{eq: gain recovery} shows a unique gain, and expressions in Eq.~\eqref{eq: relaxation variables} are all simultaneously satisfied with Eq.~\eqref{eq: Sk aug unrelaxed}.
\end{proof}

The application of Corollary~\ref{col: Sk LMI equality} motivates imposing the proposed rank constraint on $M_k$. Applying the Guttman rank additivity formula \cite{Zhang-SchurTextbook},
\begin{equation}
    \mathrm{rank}(M_k) = \mathrm{rank}(\hat{P}_k) + \mathrm{rank}(\mathbf{S}_k - \mathbf{U}_k
\hat{P}_k^{-1}
\mathbf{U}_k^\top)
\end{equation}
Under Assumption \ref{assump: Pk Invert}, $\hat{P}_k$ is full rank. Thus, it can be seen that for Corollary~\ref{col: Sk LMI equality}  to be true, $\mathrm{rank}(M_k)$ must be equal to $\mathrm{rank}(\hat{P}_k)=n_x$. Rank constraints are generally nonconvex, so solving it through convex optimization requires sequential convex programming (SCP).

\comment{
\begin{remark}
    The problem can also be addressed with standard linearization and SCP. However, the bilinearities present make the problem particularly challenging from a nonconvex optimization standpoint. In contrast, Problem~\ref{prob: relaxed aug state} removes the bilinearities through relaxation, resulting in a formulation for which several established approaches for rank-constrained optimization are available.
\end{remark}
}

Firstly, trace minimization is a widely used heuristic for obtaining approximate solutions to rank-constrained problems \cite{Sun-IRM}. Because $\mathrm{tr}(S_k) = \mathrm{tr}(Y_k) + \mathrm{tr}(Z_k)$, this heuristic need to account for both trace terms. The term $\mathrm{tr}(Y_k)$ already appears in the original quadratic cost in Eq.~\eqref{eq: J quad cost}, but the trace heuristic should still consider a $\mathrm{tr}(Z_k)$ component. This motivates a small weight $\epsilon_Z$ on $\mathrm{tr}(Z_k)$ to promote the trace heuristic while maintaining the original quadratic cost.
\begin{equation}
    J_Z = J + \epsilon_Z\cdot\sum_{k=0}^{N-2} \mathrm{tr}(Z_k)
\end{equation}

Next, the Iterative Rank Minimization (IRM) algorithm from \cite{Sun-IRM} is used to solve this rank-constrained problem. To summarize IRM: let $\boldsymbol{V}^{(i)}_k$ be the orthonormal eigenvectors corresponding to the $m - n_x$ smallest eigenvalues and $e^{(i)}_k$ be greater than or equal to the $(m - n_x)$-th smallest eigenvalue of $M_k$ at the $i$-th iteration. Since $M_k$ is a symmetric matrix and thus diagonalizable, $\mathrm{rank}(M_k)$ is equal to the number of nonzero eigenvalues, implying $e^{(i)}_k =0$ for rank constraint satisfaction. Calculation of $e^{(i)}_k$ is nonconvex, so \cite{Sun-IRM} approximates it by using the previous iteration's eigenvectors:
\begin{equation}\label{eq: Mk VD relation}
    \begin{aligned}
e_k I_{m - n_x} - (V^{(i-1)}_{k})^\top M_k V^{(i-1)}_{k} \succeq 0,
\qquad
e_k\geq 0     
\end{aligned}
\end{equation}
where $e_k$ is the optimized rank-violation slack variable. To ensure that $e^{(i)}_k\rightarrow0$, the original IRM algorithm \cite{Sun-IRM} enforces a hard constraint $e_k \leq e^{(i-1)}_{k}$ at each $i$-th convex subproblem. It was observed that this led to poor numerical properties for CS-OF problems. Instead, this paper removes the hard constraints, and an augmented Lagrangian approach with a quadratic penalty term \cite{Bertsekas-LM, Oguri-SCVX} is used: 
\begin{equation} \label{eq: quad penalty}
\begin{aligned}
\mathcal{P}(w^{(i)},\lambda^{(i)}_k,e_k) = \lambda^{(i)}_k \cdot e_k + (w^{(i)}/2) \cdot e_k \cdot e_k
\end{aligned}
\end{equation}
After each convex subproblem, $e^{(i)}_k$ is set equal to the $(m - n_x)$-th smallest eigenvalue of $M_k$, and the Lagrange multipliers and weights are updated accordingly:
\begin{equation} \label{eq: LM updates}
\begin{aligned}
    \lambda^{(i+1)}_k = \lambda^{(i)}_k + w^{(i)} e^{(i)}_k,
    \qquad
    w^{(i+1)} = \beta w^{(i)} \quad (\beta > 1)
\end{aligned}
\end{equation}

At the $i$-th iteration, Problem~\ref{prob: irm} is solved.
\begin{problem}\label{prob: irm}
Problem~\ref{prob: relaxed aug state} with rank constraint, single iteration
\begin{subequations}
\begin{align}
\min_{\substack{
\boldsymbol{\mu}_k, \mathbf{P}^-_k,\mathbf{P}_k,\bar{\boldsymbol{u}}_k
\\U_k, S_k, Z_k, Y_k,
e_k
}} 
\quad& \mathcal{L} = J_Z + \sum_{k=0}^{N-2}\mathcal{P}(w^{(i)},\lambda^{(i)}_k,e_k)\\
\st \quad
&\mathrm{Eqs.~\eqref{eq: mean evolution},\eqref{eq: covariance filter update}}
, \;\forall k \in \mathbb{Z}_{0:N-2}
\\
&\mathrm{Eqs.~\eqref{eq: augmented relaxation evolution},\eqref{eq: Mk VD relation}}
, \;\forall k \in \mathbb{Z}_{0:N-2}
\\
&\mathrm{Eq.~\eqref{eq: boundary conditions}}
\end{align} 
\end{subequations}
\end{problem}
The pseudocode for sequentially solving CS with augmented state statistics is given in Algorithm~\ref{algo: IRM algo}. Finally, Theorem~\ref{theorem: irm convergence} presents the main result of the paper.
\begin{algorithm}
\caption{CS-OF with Rank Constraints}
\label{algo: IRM algo}
\begin{algorithmic}
\State \textbf{Input:} $w_0, \beta, \epsilon_1, \epsilon_2$
\State \textbf{Output:} $\bar{\boldsymbol{u}}_k,K_k$
\State \textbf{Initialize:} Solve the relaxed Problem~\ref{prob: relaxed aug state} with trace-heuristic objective $J_Z$ to obtain $V^{(0)}_k$ and $e^{(0)}_k$. Set $i = 0$.
\While{$\max_k ({e^{(i)}_k}) \ge \epsilon_1$ \textbf{or} $|{J^{(i)} - J^{(i-1)}}| \ge \epsilon_2$}
\State Update $i = i + 1$.
\State Solve Problem~\ref{prob: irm}.
\State Update $V^{(i)}_k$ and $e^{(i)}_k$ via eigenvalue decomposition.
\State Update $\lambda^{(i)}, w^{(i)}$ using Eq.~\eqref{eq: LM updates}.
\EndWhile
\State \textbf{Return:} $\bar{\boldsymbol{u}}_k$, $K_k=U_k \hat{P}_k^{-1}$ upon convergence.
\end{algorithmic}
\end{algorithm}

\begin{theorem}\label{theorem: irm convergence}
The solution of Algorithm~\ref{algo: IRM algo} is a local optimum of an $\epsilon_Z$-regularized approximation of Problem~\ref{prob: CS with Aug State}.
\end{theorem}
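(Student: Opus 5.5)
We argue by identifying the problem that the algorithm's fixed points solve, and then invoking standard convergence results for the augmented-Lagrangian (method-of-multipliers) scheme used inside IRM. First we define the $\epsilon_Z$-regularized approximation explicitly: Problem~\ref{prob: CS with Aug State} with objective $J_Z = J + \epsilon_Z\sum_k \mathrm{tr}(Z_k)$. Here $Z_k = \Sigma_k^\top \hat{P}_k^{-1}\Sigma_k$ is a function of $\mathbf{P}_k$ alone, so it adds no new decision variable. Using Lemma~\ref{lemma: augmented stat evolution, relaxed} together with the Guttman rank additivity identity, this problem is equivalent to Problem~\ref{prob: relaxed aug state} with objective $J_Z$ and the additional constraint $\mathrm{rank}(M_k) = n_x$ for all $k$. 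Indeed, $\mathrm{rank}(M_k)=n_x$ together with $\hat{P}_k \succ 0$ (Assumption~\ref{assump: Pk Invert}) forces $\mathbf{S}_k - \mathbf{U}_k\hat{P}_k^{-1}\mathbf{U}_k^\top = 0$. By Corollary~\ref{col: Sk LMI equality}, this recovers a unique $K_k = U_k\hat{P}_k^{-1}$, and Eq.~\eqref{eq: covariance control update, relaxed} then coincides with Eq.~\eqref{eq: covariance control update}. Hence feasible points, and local optima, of the rank-constrained relaxed problem correspond one-to-one with those of the regularized Problem~\ref{prob: CS with Aug State}.

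Next we rewrite the rank constraint as the scalar equality $\lambda_{n_x+1}$-type condition $e_k = 0$, where $e_k$ denotes the $(m-n_x)$-th smallest eigenvalue of $M_k$. Since $M_k \succeq 0$, this is equivalent to the sum of the $m-n_x$ smallest eigenvalues vanishing. That sum equals $\min_{V^\top V = I}\mathrm{tr}(V^\top M_k V)$, and it is bounded above by the largest eigenvalue of $V^\top M_k V$ for any orthonormal $V$. Consequently, Eq.~\eqref{eq: Mk VD relation} with $V = V^{(i-1)}_k$ is a convex restriction that is tight when $V^{(i-1)}_k$ spans the minimal eigenspace of $M_k$. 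Problem~\ref{prob: irm} is therefore a convexified augmented-Lagrangian subproblem for
\[
\min J_Z \quad \text{s.t. convex constraints},\quad e_k(M_k)=0,
\]
with multiplier update Eq.~\eqref{eq: LM updates} and an increasing penalty $w^{(i)}$.

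The convergence argument then has two parts. (a) Each subproblem is a convex SDP whose solution is attained; the objective is coercive in $e_k$ because of the quadratic penalty, and $\bar{\boldsymbol{u}}_k, Y_k, Z_k$ are bounded because $R_k \succ 0$ and $\epsilon_Z > 0$. (b) At termination, $\max_k e^{(i)}_k < \epsilon_1$ and $|J^{(i)}-J^{(i-1)}|<\epsilon_2$. Taking the limit $\epsilon_1,\epsilon_2\to 0$, the iterate is a fixed point: $V^{(i)}_k = V^{(i-1)}_k$ spans the null space of $M_k$, and the rank constraint holds. At such a fixed point, the KKT conditions of the final convex subproblem involve the multipliers $\lambda_k$ and the LMI dual for Eq.~\eqref{eq: Mk VD relation}. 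Because the linearized eigen-constraint is tangent to the true constraint at a fixed point (first-order eigenvalue perturbation with a fixed eigenvector basis), these coincide with the KKT conditions of the rank-constrained problem. We then appeal to the method-of-multipliers theory \cite{Bertsekas-LM} and the IRM local-convergence result \cite{Sun-IRM} to conclude local optimality.

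The main obstacle is step (b): showing that the eigenvector linearization yields a stationary point of the true nonconvex problem and not merely a feasible one. The difficulty is that the $(m-n_x)$-th eigenvalue is nondifferentiable when eigenvalues cluster. We would first try to assume a spectral gap between the $n_x$-th and $(n_x+1)$-th largest eigenvalues of $M_k$ at the limit, which holds since $\hat{P}_k \succ 0$ makes the top $n_x$ eigenvalues strictly positive while the rest are zero. This gap makes the invariant subspace smooth and lets the first-order argument go through. We would also have to be candid that the guarantee is local, conditional on convergence of the sequence, and holds up to the tolerances $\epsilon_1$ and $\epsilon_2$.
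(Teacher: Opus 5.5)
Your argument follows the same route as the paper's proof, which consists of three moves. First, it invokes the local-convergence conditions of IRM and augmented-Lagrangian SCP to obtain a local optimum of the rank-constrained problem with objective $J_Z$. Second, it uses Corollary~\ref{col: Sk LMI equality} (via the Guttman rank identity) to identify that point with a point of Problem~\ref{prob: CS with Aug State}. Third, it notes that the two objectives differ only by $\epsilon_Z\sum_k\mathrm{tr}(Z_k)$. Your first paragraph and your closing appeal to the method-of-multipliers and IRM results reproduce exactly this. Your caveat that the guarantee is conditional on convergence corresponds to the paper's ``under the local-convergence conditions.''

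Where you go beyond the paper to justify step (b), several claims are inaccurate and should be fixed or dropped.

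First, the sum of the $m-n_x$ smallest eigenvalues is bounded by $(m-n_x)\lambda_{\max}(V^\top M_k V)$, not by $\lambda_{\max}(V^\top M_k V)$. What Courant--Fischer gives directly is that the $(m-n_x)$-th smallest eigenvalue itself is at most $\lambda_{\max}(V^\top M_k V)$ for every orthonormal $V\in\mathbb{R}^{m\times(m-n_x)}$. That is all you need for Eq.~\eqref{eq: Mk VD relation} to be a convex restriction.

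Second, letting $\epsilon_1,\epsilon_2\to 0$ does not by itself give a fixed point. Convergence of the iterates and of the eigenspaces has to be assumed.

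Third, the spectral gap (which does hold automatically at a rank-$n_x$ point) does not remove the nondifferentiability you flagged. At every rank-feasible point the bottom $m-n_x=n_x+n_u\geq 2$ eigenvalues of $M_k$ all coincide at zero, so the $(m-n_x)$-th smallest eigenvalue is nondifferentiable there regardless of the gap. What the gap does give is the directional derivative $\lambda_{\max}(V_0^\top\Delta V_0)$ in direction $\Delta$, where $V_0$ is an orthonormal basis of $\ker M_k$, and the LMI reproduces this exactly. So your tangency claim can be salvaged, but only if it is phrased with tangent cones and directional derivatives rather than gradients and KKT equations.

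Finally, this eigenvalue is nonnegative on the PSD cone, so every rank-feasible point is a global minimizer of it there. The constraint is therefore degenerate, and the smoothness and LICQ hypotheses behind the local convergence theory of the method of multipliers fail. The step from a stationary fixed point to a local optimum of the rank-constrained problem thus remains an assumption, and it is exactly the assumption the paper's proof makes.
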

\begin{proof}
Under the local-convergence conditions of IRM and augmented-Lagrangian SCP~\cite{Sun-IRM, Bertsekas-LM, Oguri-SCVX}, Algorithm~\ref{algo: IRM algo} converges to a local optimum of the regularized rank-constrained formulation and Corollary~\ref{col: Sk LMI equality} is achieved. Since the lifted objective differs from the original objective only by $\epsilon_Z\sum_k\mathrm{tr}(Z_k)$, the resulting solution is an $\epsilon_Z$-regularized local optimum and becomes a local optimum of the original problem when $\epsilon_Z=0$.
\end{proof}

\section{Numerical Results}
A double integrator system with $N=20$ discretized nodes is used as a demonstration. All other simulation parameters are found in Appendix~\ref{appendix: sim params}. 

\subsection{Generalization of Initial Statistics}
This section compares the proposed generalized framework with the previous works in \cite{Pilipovsky-CS-Output-Feedback} and presents scenarios in which the initial statistics cannot be addressed by the previous method. Consider two initial a priori augmented state covariances in Eq.~\eqref{eq: results cases}:
\begin{subequations}\label{eq: results cases}
\begin{align}
\text{Case 1:}\quad
\mathbf{P}^-_0 &= 
\begin{bmatrix}
    \hat{P}^-_0 + \widetilde{P}^-_0&\hat{P}^-_0\\
    \hat{P}^-_0 & \hat{P}^-_0
\end{bmatrix}
\quad
&\text{with}
\quad
{L}_{\mathrm{kf},k}
\label{eq: case1} \\
~
\text{Case 2:}\quad
\mathbf{P}^-_0 &= 
\begin{bmatrix}
    P_0 & P_0\\
    P_0& P_0 + \widetilde{P}^-_0
\end{bmatrix}
\quad
&\text{with}
\quad
{L}_{\mathrm{kf},k}
\label{eq: case2} 
\end{align}
\end{subequations}
where the block matrices correspond to Eq.~\eqref{eq: Phat block matrix}. Case 1 corresponds to the case in which the orthogonality between the state estimate and the estimation error is satisfied, consistent with the scenario considered in \cite{Pilipovsky-CS-Output-Feedback}. Conversely, Case 2 demonstrates a scenario where the error is \emph{orthogonal to the true state}. This is shown in Appendix~\ref{appendix: case 1 and 2 initial covariance}.

The relevance of Case 2 is motivated by its connection to the analysis of control performance about a fixed plant trajectory (see Scenario 3 in Introduction). Prior studies such as \cite{Woffinden-LinCov}, and related work in \cite{Williams-LinCov, Cunningham-LinCov}, examine the impact of control actions arising from measurement feedback errors on a \emph{pre-determined} truth trajectory. As such, the statistics described in Eq.~\eqref{eq: case2} assume that the estimation error is \emph{uncorrelated with the true state}. Appendix~\ref{appendix: MC experiment} provides a physical interpretation of the statistical quantities used in the Monte Carlo experiment. Orthogonality also implies $\hat{P}^-_0 \prec P_0$, so the methods in \cite{Pilipovsky-CS-Output-Feedback} cannot handle, and therefore cannot synthesize control policies for, Case 2 where $P_0 \prec \hat{P}^-_0$. This limitation is shared by related approaches in  \cite{Ridderhof-CS, Chen-CS-Output-Feedback, Oguri-Safe-Autonomy}.

\begin{figure}[!tbh]
\centering\includegraphics[width=0.49\textwidth]{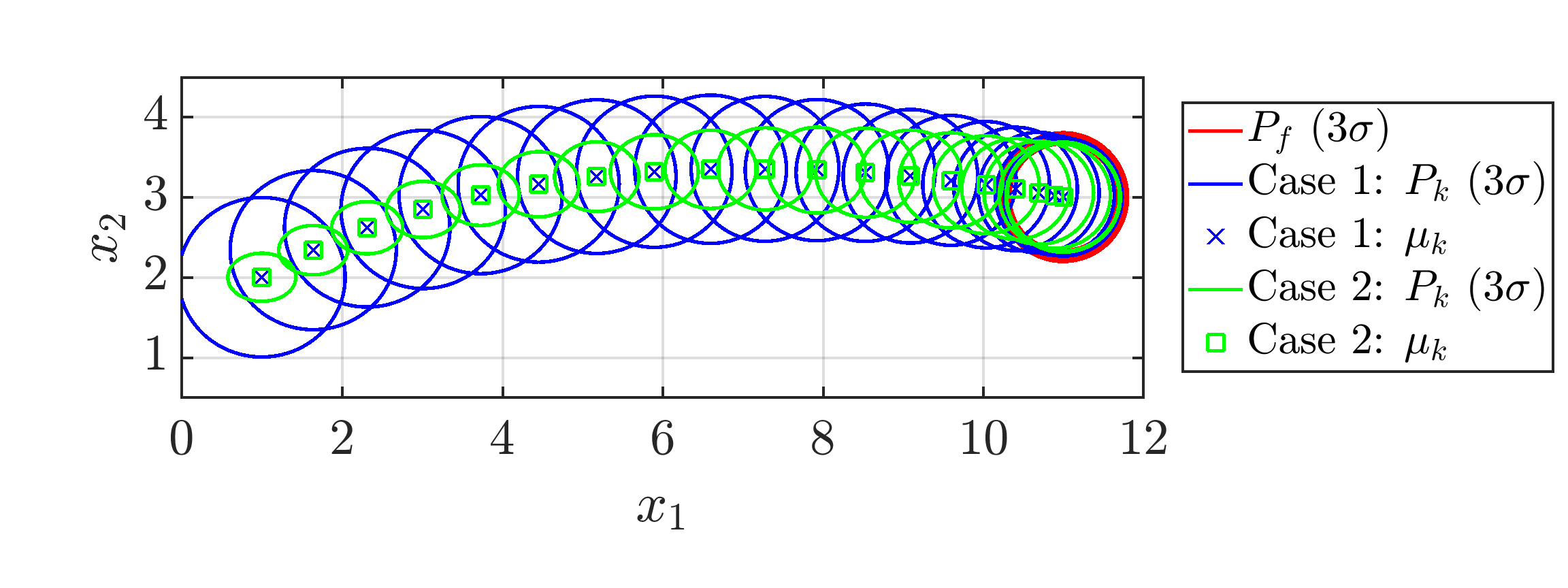}
	\caption{Case 1 \& 2: Trajectory of true state distributions calculated using the proposed method.}
	\label{fig: traj}
\end{figure}

Figure~\ref{fig: traj} shows the optimized distributions of each case. Despite both cases having the same a priori estimate covariance, the behavior of the true covariance differs: for Case 1, the covariance shrinks, while for Case 2, the covariance grows. To validate the proposed methods, the optimized results are compared with those obtained from prior work \cite{Pilipovsky-CS-Output-Feedback} as well as with outcomes from a large-scale Monte Carlo simulation based on the sampling procedure described in Table~\ref{tab: MC_Sampling}. The comparison is presented in Figure~\ref{fig: validation}. Monte Carlo simulations for both Case 1 and Case 2 exhibit statistical consistency with the theoretical predictions, providing additional validation of the proposed framework.

\begin{figure}[!tbh]
\centering\includegraphics[width=0.49\textwidth]{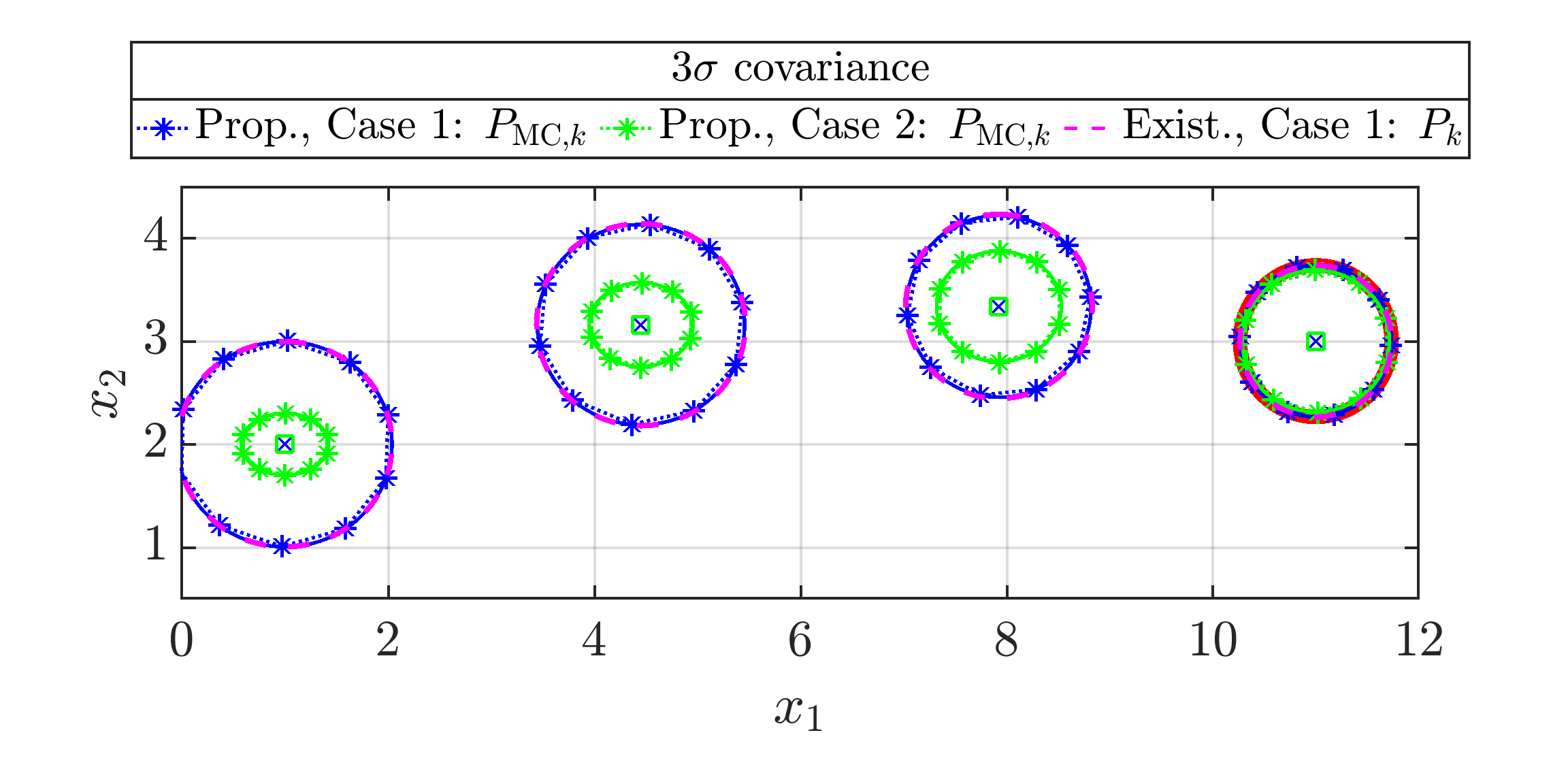}
	\caption{Validation of the proposed method for Cases 1 \& 2 at $k = 0, 5, 10, 19$ with the existing method \cite{Pilipovsky-CS-Output-Feedback} and a Monte Carlo simulation ($n_{\text{MC}} = 10{,}000$).}
	\label{fig: validation}
\end{figure}


\subsection{Comparison with Underweighted Kalman Gain}
This section demonstrates a scenario with a non-orthogonal estimator. The initial statistical relationship in Case 3 is orthogonal, except that an underweighted Kalman gain makes the statistics non-orthogonal after the first measurement update. 
\begin{equation}\label{eq: results uw}
\text{Case 3:}\quad
\mathbf{P}^-_0 = 
\begin{bmatrix}
    \hat{P}^-_0 + \widetilde{P}^-_0&\hat{P}^-_0\\
    \hat{P}^-_0 & \hat{P}^-_0
\end{bmatrix}
\quad
\text{with}
\quad
{L}_{\mathrm{uw},k}
\end{equation}

\begin{figure}[!tbh]
\centering\includegraphics[width=0.49\textwidth]{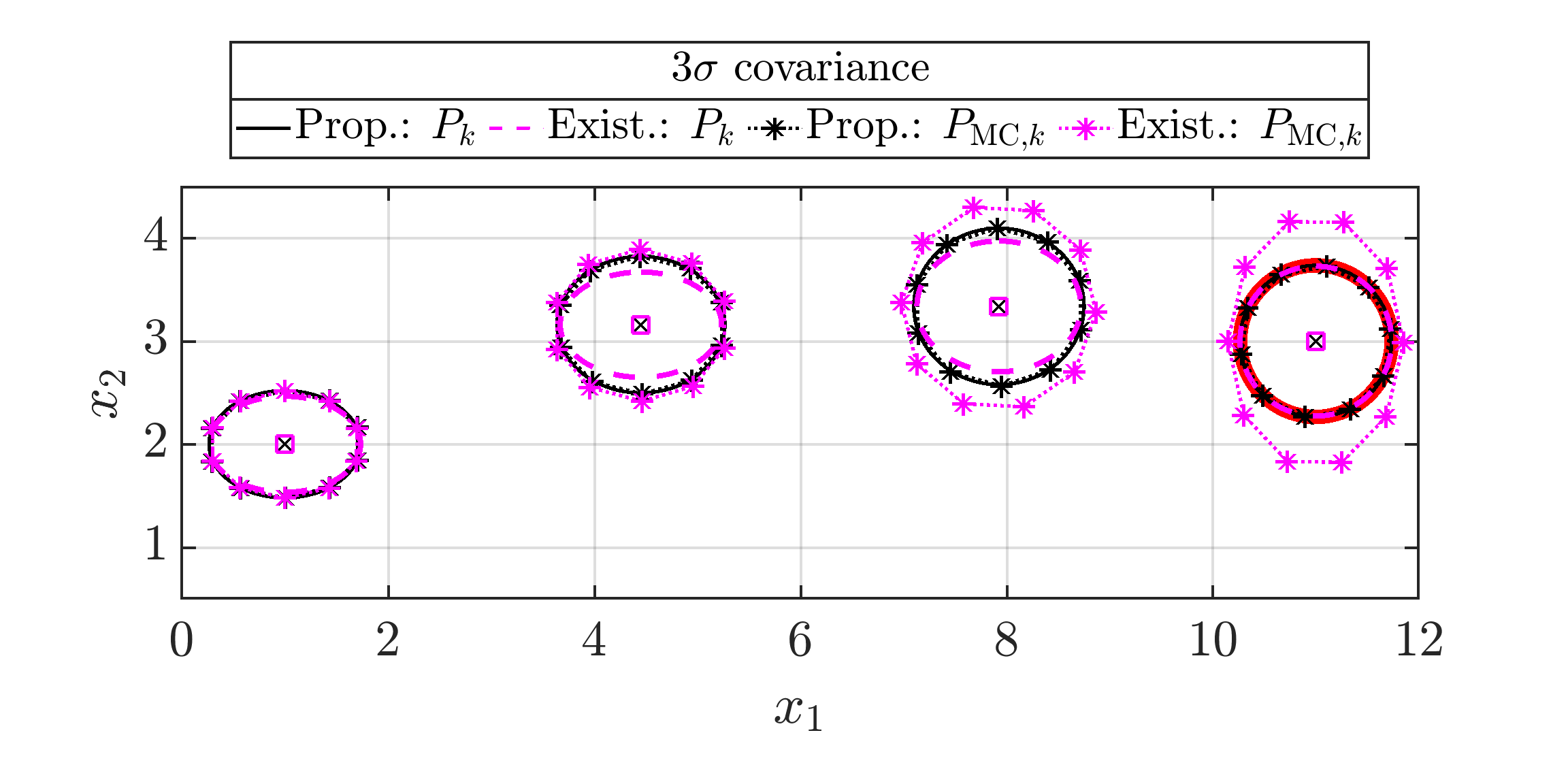}
	\caption{Case 3 with underweighted Kalman gain: true state distributions at $k = 0, 5, 10, 19$ calculated from both the proposed and existing \cite{Pilipovsky-CS-Output-Feedback} methods, compared with a Monte Carlo simulation ($n_{\text{MC}} = 10{,}000$).}
	\label{fig: case 3}
\end{figure}
Figure~\ref{fig: case 3} compares the predicted covariances with those obtained from a Monte Carlo simulation. The approach from \cite{Pilipovsky-CS-Output-Feedback} is unable to accurately capture the statistics of the true state because underweighting violates its required orthogonality condition. On the other hand, the framework proposed in this paper does not rely on the orthogonality assumption and is therefore able to produce solutions that are consistent with the actual statistical relationship between the true state and the state estimate. 

\subsection{Additional Analysis}
\begin{figure}[!tbh]
\centering\includegraphics[width=0.45\textwidth]{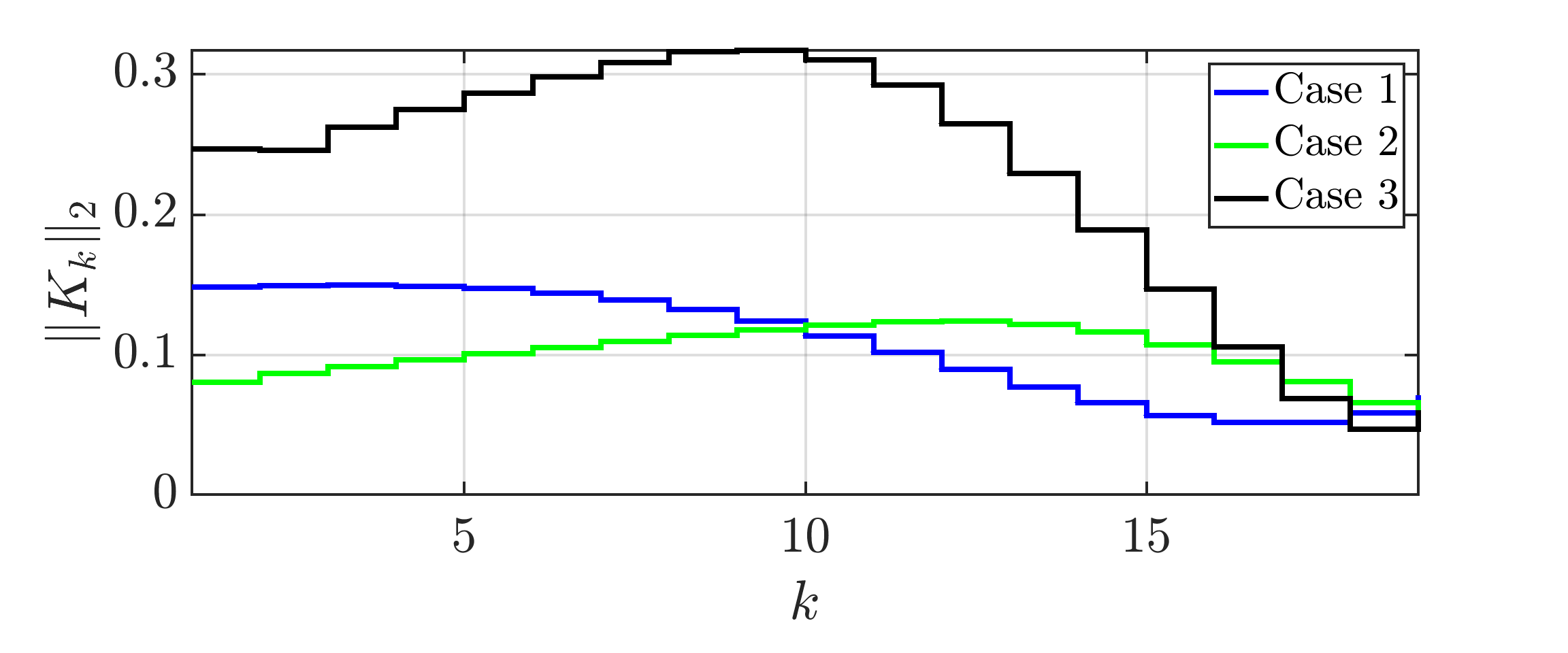}
	\caption{Time history for spectral norm of feedback gain matrix $K_k$ calculated using the proposed method.}
	\label{fig: control}
\end{figure}

Figure~\ref{fig: control} illustrates the magnitude of the feedback action applied over time. In Case 1, the feedback gain is largest at the beginning of the trajectory and decreases thereafter. In contrast, Case 2 exhibits a feedback profile that reaches its peak near the midpoint of the trajectory. This suggests that despite identical initial estimated state distributions and the policy's functional form, delaying the control actions may be optimal due to non-orthogonality. For Case 3, the suboptimality of the Kalman gain shows that control feedback peaks later in the trajectory compared to Case 1. These behaviors highlight that differences in the correlation structure can significantly influence the resulting control solution.

\begin{figure}[!tbh]
\centering\includegraphics[width=0.49\textwidth]{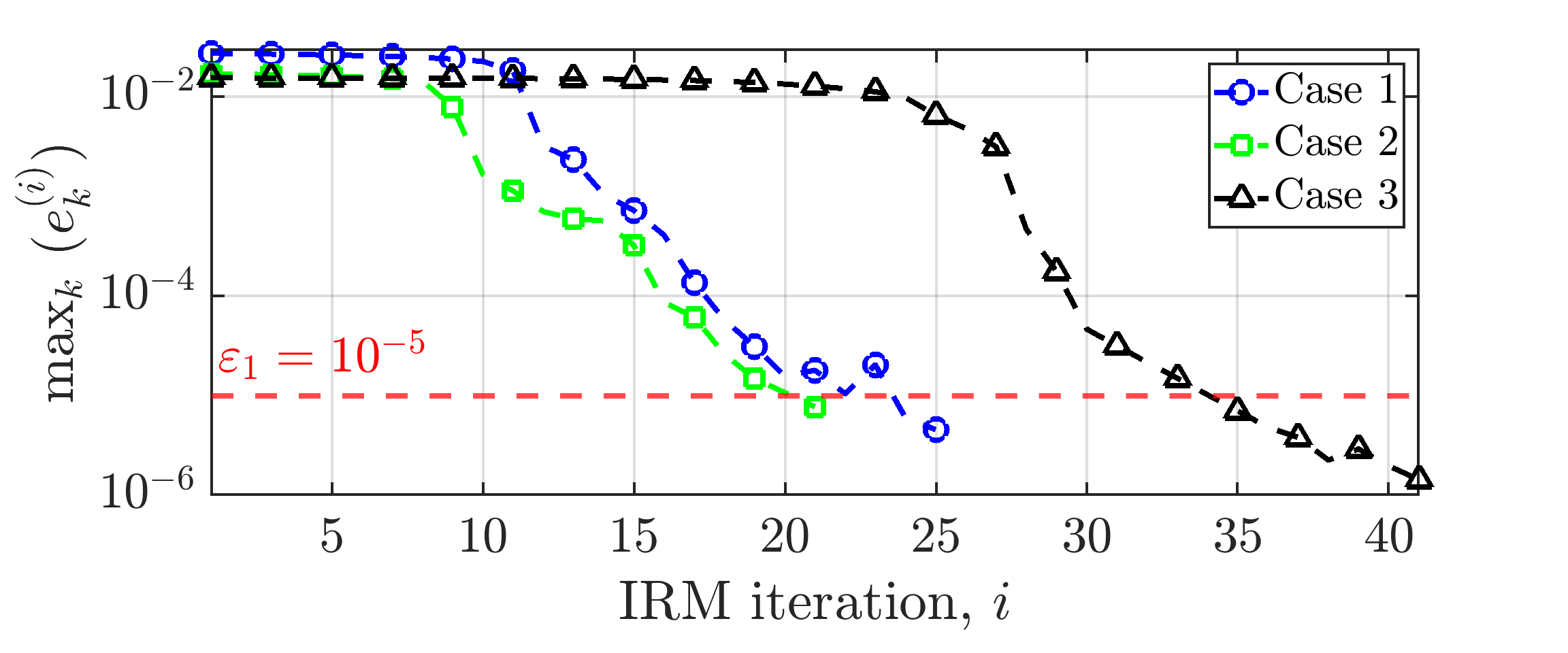}
	\caption{IRM convergence profile for each case.}
	\label{fig: erank iter}
\end{figure}

Figure~\ref{fig: erank iter} shows the IRM convergence profile of the three cases. This SCP problem converged after 25 iterations for Case 1 and 21 iterations for Case 2, and took less than one minute for each case. Case 3 converged after 41 iterations and took less than 2 minutes. In each case, the rank-constraint violations converge to below the tolerance, indicating that the rank condition is satisfied to the specified numerical accuracy and that the converged solution is feasible for the augmented-state CS formulation.


\section{Conclusions}
This paper generalizes the covariance steering with output feedback (CS-OF) problem to cases where the estimation error is not orthogonal to the state estimate. Existing CS-OF frameworks are limited by the orthogonality assumption and therefore cannot address scenarios with correlated estimation errors. The proposed approach casts the CS-OF problem as an augmented-state formulation to capture the coupled statistical relationships and applies a rank-constrained sequential convex programming framework for policy optimization. The resulting method is validated through comparisons with the prior approaches as well as large-scale Monte Carlo simulations. Although relaxing the orthogonality assumption introduces nonconvexity into the problem, the added modeling generality broadens the range of applications that can be addressed for CS-OF.

\section*{Appendix}
Appendix~\ref{sec: appendix} can be found in \cite{Qi-non-ortho}.

\bibliographystyle{IEEEtran}
\bibliography{references}
\newpage
\section{Appendix}\label{sec: appendix}
\subsection{Useful Identities}
\subsubsection{Covariance of Augmented Random Vectors}
Define random vectors $X \in \mathbb{R}^{n}$, $Y \in \mathbb{R}^{m}$, and $Z = [       X^\top, Y^\top]^\top\in \mathbb{R}^{n + m}$. The covariance matrix of $Z$ is
\begin{equation} \label{eq: general X and Y}
    \mathrm{Cov}(Z) = 
    \begin{bmatrix}
        \mathrm{Cov}(X) & \mathrm{Cov}(X,Y) \\
        \mathrm{Cov}(Y,X) & \mathrm{Cov}(Y)
    \end{bmatrix}
\end{equation}

If $X$ and $Y$ are uncorrelated (i.e., $\mathrm{Cov}(Y,X) = 0$),
\begin{equation}\label{eq: uncorr X and Y}
    \mathrm{Cov}\left(
    \begin{bmatrix}
        X+Y \\ Y
    \end{bmatrix}
    \right) = 
    \begin{bmatrix}
       \mathrm{Cov}(X) + \mathrm{Cov}(Y)& \mathrm{Cov}(Y) \\
        \mathrm{Cov}(Y) & \mathrm{Cov}(Y)
    \end{bmatrix}
\end{equation}

\subsubsection{Orthogonality and Unbiased Estimator}
Define true state $X\in \mathbb{R}^{n}$, estimate $\hat{X}\in \mathbb{R}^{n}$ and estimate error $\widetilde{X} = X - \hat{X}$. The error is orthogonal to the estimate $\hat{X}$ if $\mathbb{E}[\hat{X}\widetilde{X} ^\top] = 0$ and unbiased if $\mathbb{E}[\widetilde{X} ] = 0$. Under both conditions,
\begin{equation}\label{eq: filter cross corr}
    \mathrm{Cov}(\hat{X}, \hat{X}+\widetilde{X} ) = \mathrm{Cov}(\hat{X})
    \qquad
    \mathrm{Cov}(\hat{X}, \widetilde{X} ) =0
\end{equation}

\subsection{Proof of Lemma~\ref{lemma: CSOF equivalence}}\label{appendix: CSOF equivalence}
The claim is that if the orthogonality of the estimator is assumed, then Eqs.~\eqref{eq: covariance filter update} and \eqref{eq: covariance control update} reduce to Eqs.~\eqref{eq: CSOF, old version}. Using Eqs.~\eqref{eq: uncorr X and Y} and \eqref{eq: filter cross corr}, the a priori augmented covariance of an orthogonal estimator is:
\begin{equation}\label{eq: P_aug0 special case}
\mathbf{P}^-_k
=
\begin{bmatrix}
    \hat{P}^-_k + \widetilde{P}^-_k&\hat{P}^-_k\\
    \hat{P}^-_k&\hat{P}^-_k
\end{bmatrix}
\end{equation}
Using Eq.~\eqref{eq: P_aug0 special case} in Eq.~\eqref{eq: covariance filter update},
\begin{equation} \label{eq: appendix, filter update}
\begin{aligned}
\mathbf{P}_k 
&=
\begin{bmatrix}
  \hat{P}^-_k + \widetilde{P}^-_k & (L_kH_k\widetilde{P}^-_k + \hat{P}^-_{k})^\top \\ 
  L_kH_k\widetilde{P}^-_k + \hat{P}^-_{k} & \hat{P}^-_{k} + L_{k}P_{\widetilde{y}^-_{k}}L_{k}^\top \\ 
\end{bmatrix}
\end{aligned}
\end{equation}
Firstly, note $\hat{P}^-_{k+1} =(A_k +B_kK_k)\hat{P}_k(A_k +B_kK_k)^\top$. Next, given $L_k$ is the optimal Kalman gain in Eq.~\eqref{eq: optimal kalman gain}, Eq.~\eqref{eq: Kalman Covariance Update} can be rewritten using $\widetilde{P}_k = \widetilde{P}^-_k - L_k P_{\widetilde{y}^-_k} L_k^\top$. Also with orthogonality, $\hat{P}_k + \widetilde{P}_k = \hat{P}^-_k + \widetilde{P}^-_k$. Thus, Eq.~\eqref{eq: CSOF, old version} becomes
\begin{equation}\label{eq: appendix, prev work cov updates}
    \begin{aligned}
\hat{P}_{k} = \hat{P}^-_{k} + L_{k}P_{\widetilde{y}^-_{k}}L_{k}^\top, 
\qquad
P_k = \hat{P}^-_k + \widetilde{P}^-_k
    \end{aligned}
\end{equation}
where it can be seen that $P_k$ and $\hat{P}_k$ match with corresponding quantities in Eq.~\eqref{eq: appendix, prev work cov updates}. Eq.~\eqref{eq: filter cross corr} can be applied to Eq.~\eqref{eq: filter update} to verify that $\mathrm{Cov}(\hat{\boldsymbol{x}}_k,\boldsymbol{x}_k)$ in terms of the a priori values is equivalent to the cross-correlation term in Eq.~\eqref{eq: appendix, filter update}. This shows that  Eq.~\eqref{eq: covariance filter update} and \eqref{eq: covariance control update} produce the same statistical quantities as Eq.~\eqref{eq: CSOF, old version}.

\subsection{Simulation Parameters for Numerical Result}\label{appendix: sim params}
The dynamics of the system is given by
\begin{equation}
    A_k =
    \begin{bmatrix}
1 & 0 & \Delta t &0\\
0& 1& 0 &\Delta t \\
0&0&1&0\\
0&0&0&1\\
    \end{bmatrix}
\qquad
B_k =
    \begin{bmatrix}
\Delta t &0\\
0 &\Delta t \\
1&0\\
0&1\\
    \end{bmatrix}
\end{equation}
where $\Delta t = 0.2$. Let the linear system dynamics be subject to additive Gaussian process noise $G_k = 0.01 \cdot I_{n_x}$. The initial mean of $\boldsymbol{\mu}_0 = [1,2,3,2]^\top$ is used for the mean of the true state and the a priori estimate. Terminal mean and covariance constraint $\boldsymbol{\mu}_f = [11,3,0,0]^\top$ and $P_f = \mathrm{diag}([6,6,0.6,0.6]\cdot10^{-2})$ are imposed on the true state statistics. The measurement has $H_k = [0_{3\times1},I_3]$ with $D_k = \mathrm{diag}([0.1,0.1,0.1])$. The control weight is equal for both axes $R_k = I_2$ and $\mathrm{tr}(Z_k)$ is weighted by $\epsilon_Z = 10^{-3}$. For the IRM solver parameters, $w_0 = 1$, $\beta = 1.2$, and $\epsilon_1 = \epsilon_2= 10^{-5}$. The convex problems are solved via \texttt{CVX} with \texttt{MOSEK} in \texttt{MATLAB} R2024b.

For Case 1, $\widetilde{P}^-_0 = \mathrm{diag}([2,1,1.4,1.4]\cdot 10^{-2})$ and $\hat{P}^-_0= \mathrm{diag}([10,10,2,2]\cdot 10^{-2})$, and for Case 2, $\widetilde{P}^-_0 = \mathrm{diag}([8,9,0.6,0.6]\cdot 10^{-2})$ and $P_0= \mathrm{diag}([2,1,1.4,1.4]\cdot 10^{-2})$. Note that the value for the a priori estimate $\hat{P}^-_0$ is the same for both Case 1 and 2, but the initial estimation errors are different. 

For Case 3, $\widetilde{P}^-_0 = \mathrm{diag}([4,2,2.8,2.8]\cdot 10^{-2})$ and $\hat{P}^-_0 = \mathrm{diag}([2,1,1.4,1.4]\cdot 10^{-2})$. An underweighting factor of $p = 0.25$ is selected to accentuate the non-orthogonality. 

\subsection{Derivation of Initial Augmented Covariances}\label{appendix: case 1 and 2 initial covariance}
Let the filter's error $\widetilde{\boldsymbol{x}}^-_0=\boldsymbol{x}_0-\hat{\boldsymbol{x}}^-_0$ at the initial time. Note that there is no assumption here regarding the relationship between $\widetilde{\boldsymbol{x}}^-_k$ and $\boldsymbol{x}_k-\hat{\boldsymbol{x}}^-_k$ at future timesteps. Also, $\mathbb{E}[\widetilde{\boldsymbol{x}}^-_0]=0$, $\mathrm{Cov}(\widetilde{\boldsymbol{x}}^-_0) = \widetilde{P}^-_0$, and also $\mathrm{Cov}(-\widetilde{\boldsymbol{x}}^-_0) = \widetilde{P}^-_0$ also holds.

For Case 1, $\mathbb{E}[\boldsymbol{\hat{x}}^-_0(\widetilde{\boldsymbol{x}}^-_0)^\top] = 0$, so $\mathrm{Cov}(\boldsymbol{\hat{x}}^-_0,\widetilde{\boldsymbol{x}}^-_0) = 0$. It can be seen that $\boldsymbol{x}_0 = \boldsymbol{\hat{x}}^-_0 + \widetilde{\boldsymbol{x}}^-_0$, so from Eq.~\eqref{eq: uncorr X and Y}:
\begin{equation}
    \mathbf{P}^-_0 = 
\begin{bmatrix}
    \mathrm{Cov}(\hat{\boldsymbol{x}}^-_0) + \mathrm{Cov}(\widetilde{\boldsymbol{x}}^-_0) & \mathrm{Cov}(\hat{\boldsymbol{x}}^-_0)\\
    \mathrm{Cov}(\hat{\boldsymbol{x}}^-_0)& \mathrm{Cov}(\hat{\boldsymbol{x}}^-_0)
\end{bmatrix}
= 
\begin{bmatrix}
    \hat{P}^-_0 + \widetilde{P}^-_0&\hat{P}^-_0\\
    \hat{P}^-_0 & \hat{P}^-_0
\end{bmatrix}
\end{equation}

For Case 2, $\mathbb{E}[\boldsymbol{x}_0(\widetilde{\boldsymbol{x}}^-_0)^\top] =\mathbb{E}[\boldsymbol{x}_0(-\widetilde{\boldsymbol{x}}^-_0)^\top] = 0$, so $\mathrm{Cov}(\boldsymbol{x}_0,\widetilde{\boldsymbol{x}}^-_0) =\mathrm{Cov}(\boldsymbol{x}_0,-\widetilde{\boldsymbol{x}}^-_0)= 0$. It can be seen that $ \boldsymbol{\hat{x}}^-_0 = \boldsymbol{x}_0 + (- \widetilde{\boldsymbol{x}}^-_0)$, so similarly from Eq.~\eqref{eq: uncorr X and Y}:
\begin{equation}
    \mathbf{P}^-_0 = 
\begin{bmatrix}
    \mathrm{Cov}(\boldsymbol{x}_0) & \mathrm{Cov}(\boldsymbol{x}_0)\\
    \mathrm{Cov}(\boldsymbol{x}_0)& \mathrm{Cov}(\boldsymbol{x}_0) + \mathrm{Cov}(\widetilde{\boldsymbol{x}}^-_0)
\end{bmatrix}
= 
\begin{bmatrix}
    P_0 & P_0\\
    P_0& P_0 + \widetilde{P}^-_0
\end{bmatrix}
\end{equation}

\subsection{Monte Carlo Experiment}\label{appendix: MC experiment}
Table~\ref{tab: MC_Sampling} illustrates how the corresponding statistics would be interpreted from an experimental perspective via a Monte Carlo.

\begin{table}[H]
\caption{Procedure for Monte Carlo Sample}
\label{tab: MC_Sampling}

\renewcommand{\arraystretch}{1.5} 
\setlength{\tabcolsep}{6pt}       

\centering
\begin{tabular}{|c||c|c|}
\hline
Step & Case 1 & Case 2\\
\hline
1 & \multicolumn{2}{c|}{Sample $\widetilde{\boldsymbol{x}}^-_0$ from $N(0,\widetilde{P}^-_0)$} \\
\hline
2 & Sample $\hat{\boldsymbol{x}}^-_0$ from $N(\boldsymbol{\mu}_0,\hat{P}^-_0)$
& Sample $\boldsymbol{x}_0$ from $N(\boldsymbol{\mu}_0,P_0)$\\
\hline
3 & $\boldsymbol{x}_0 = \hat{\boldsymbol{x}}^-_0 + \widetilde{\boldsymbol{x}}^-_0$
& $\hat{\boldsymbol{x}}^-_0 = \boldsymbol{x}_0+\widetilde{\boldsymbol{x}}^-_0$ \\
\hline
\end{tabular}
\end{table}

\end{document}